%
%
%
%

\documentclass[10pt]{amsart}

\usepackage{amssymb,amsmath,amsthm}
\usepackage[all]{xy}
\usepackage{eucal}

\theoremstyle{plain}
\newtheorem{thm}[subsection]{Theorem}

\newtheorem{prop}[subsection]{Proposition}

\theoremstyle{definition}

\theoremstyle{remark}
\newtheorem{rem}[subsection]{Remark}

\newcommand{\ZZ}{{ \mathbb{Z} }}
\newcommand{\NN}{{ \mathbb{N} }}

\newcommand{\capC}{{ \mathcal{C} }}
\newcommand{\capO}{{ \mathcal{O} }}

\newcommand{\capR}{{ \mathcal{R} }}
\newcommand{\capX}{{ \mathcal{X} }}

\newcommand{\id}{{ \mathrm{id} }}

\newcommand{\ModR}{{ \mathsf{Mod}_\capR }}

\newcommand{\Spectra}{{ \mathsf{Sp}^\Sigma }}
\newcommand{\SpectraN}{{ \mathsf{Sp}^\NN }}

\newcommand{\Alg}{{ \mathsf{Alg} }}

\newcommand{\AlgO}{{ \Alg_\capO }}
\newcommand{\AlgOY}{{ \Alg^Y_\capO }}

\newcommand{\Loop}{{ \Omega }}
\newcommand{\Loopt}{{ \tilde{\Omega} }}
\newcommand{\Susp}{{ \Sigma }}
\newcommand{\Suspt}{{ \tilde{\Sigma} }}

\newcommand{\tensor}{{ \otimes }}

\newcommand{\wequiv}{{ \ \simeq \ }}

\newcommand{\Iso}{{  \ \cong \ }}

\newcommand{\rarrow}{{ \rightarrow }}

\newcommand{\function}[3]{{ {#1}\colon\thinspace{#2}\rarrow{#3} }}

\DeclareMathOperator*{\holim}{holim}

\begin{document}

\title[Functor calculus completions]{Functor calculus completions for retractive operadic algebras in spectra}

\author{Matthew B. Carr}
\author{John E. Harper}

\address{Department of Population and Public Health Sciences, University of Southern California, 
1450 Biggy St, NRT G517, Los Angeles, CA 90033}
\email{mbcarr@usc.edu}

\address{Department of Mathematics, The Ohio State University, Newark, 1179 University Dr, Newark, OH 43055, USA}
\email{harper.903@math.osu.edu}

\begin{abstract}
The aim of this paper is to study convergence of Bousfield-Kan completions with respect to the 1-excisive approximation of the identity functor and exotic convergence of the Taylor tower of the identity functor, for algebras over operads in spectra centered away from the null object. In Goodwillie's homotopy functor calculus, being centered away from the null object amounts to doing homotopy theory and functor calculus in the retractive setting.
\end{abstract}

\maketitle

\section{Introduction}

Let $\capR$ be a $(-1)$-connected commutative ring spectrum (i.e., a commutative monoid object in the category $(\Spectra,\tensor_S,S)$ of symmetric spectra \cite{Hovey_Shipley_Smith, Schwede_book_project}). We denote by $(\ModR,\wedge,\capR)$ the closed symmetric monoidal category of $\capR$-modules and assume that $\capO$ is an operad in $\capR$-modules whose terms are $(-1)$-connected and satisfies $\capO[0]=*$ (i.e., such $\capO$-algebras are non-unital).  We denote by $\AlgO$ the category of $\capO$-algebras in $\capR$-modules; it is pointed by the null object $*$. Let $Y$ be an $\capO$-algebra and denote by $\AlgOY$ the category of retractive $\capO$-algebras over $Y$; in other words, $\AlgOY$ is the factorization category of the identity map on $Y$. The objects in $\AlgOY$ are factorizations $Y\rightarrow X\rightarrow Y$ of the identity map $\function{\id_Y}{Y}{Y}$ in $\AlgO$, and morphisms are the commutative diagrams of the form
\begin{align*}
\xymatrix{
  Y\ar[r]\ar@{=}[d] & X\ar[r]\ar[d] & Y\ar@{=}[d]\\
  Y\ar[r] & X'\ar[r] & Y
}
\end{align*}
in $\AlgO$; it is pointed by the factorization $Y=Y=Y$ of $\id_Y$. 

To keep this paper appropriately concise, we freely use notation from \cite{Ching_Harper}. To understand how to do homotopy theory in $\AlgOY$, see \cite[2.1]{Bauer_Johnson_McCarthy}, \cite[4.9]{Harper_Zhang}; see also \cite[3.10]{Dwyer_Spalinski}, \cite{Schwede_cotangent} and note that $\AlgOY\Iso\id_Y\downarrow(\AlgO\downarrow Y)$ (the category of objects in $(\AlgO\downarrow Y)$ under $\id_Y$ \cite[II.6]{MacLane_categories}); in particular, $\AlgOY$ inherits a simplicial (e.g., \cite{Schwede_cotangent}, \cite[4.2]{Hovey}) cofibrantly generated (e.g., \cite{Hirschhorn_overcategories}) model structure from $\AlgO$  which is equipped with the positive (flat) stable model structure. Our basic assumption is that $Y$ is $(-1)$-connected and both fibrant and cofibrant in $\AlgO$.

If $X$ is a retractive $\capO$-algebra over $Y$, we say that $X$ is \emph{$k$-connected relative to $Y$} (or $k$-connected (rel. $Y$), for short) if the structure map $Y\rightarrow X$ is $k$-connected in $\AlgO$. In Goodwillie's homotopy functor calculus \cite{Goodwillie_calculus_2, Goodwillie_calculus_3, Kuhn_survey}, $\AlgOY$ is the category that naturally arises when studying Taylor towers in $\AlgO$ centered at the object $Y$. The aim of this paper is to extend several results in \cite{Blomquist, Ching_Harper_derived_Koszul_duality, Schonsheck_fibration, Schonsheck_TQ} on Bousfield-Kan completions and Taylor towers for $\capO$-algebras centered at the null object $*$, to analogous results for $\capO$-algebras centered at $Y$. Here are our main results.

In the statement of Theorems \ref{main_theorem_A}, \ref{main_theorem_B}, and \ref{main_theorem_C}, $\Loopt_Y^r$ denotes derived $r$-fold loops (\cite[10.8]{Dwyer_Spalinski}, \cite[I.2]{Quillen}) in $\AlgOY$, $\Suspt_Y^r$ denotes derived $r$-fold suspension (\cite[10.4]{Dwyer_Spalinski}, \cite[I.2]{Quillen}) in $\AlgOY$, $\Loopt_Y^\infty$ denotes the derived version of the 0-th object functor $\Loop_Y^\infty$ (\cite{Hovey_spectra}, Section \ref{sec:proofs_of_main_results}) on Hovey spectra $\SpectraN(\AlgOY)$ on $\AlgOY$, and $\Suspt_Y^\infty$ denotes the derived version of the stabilization functor $\Susp_Y^\infty$ (\cite{Hovey_spectra}, Section \ref{sec:proofs_of_main_results}) on $\AlgOY$. In the special case when $Y=*$, Theorem \ref{main_theorem_A} is proved in \cite{Ching_Harper_derived_Koszul_duality} when $r=\infty$, and subsequently in \cite{Blomquist} (using a different approach, more closely related to ideas in \cite{Blomquist_Harper_integral_chains, Blomquist_Harper, Dundas, Dundas_Goodwillie_McCarthy}) for $1\leq r\leq\infty$. We observe that these results generalize to the context of $\capO$-algebras centered at $Y$. 

\begin{thm}
\label{main_theorem_A}
Assume that $\capR,\capO,Y$ are $(-1)$-connected and $\capO[0]=*$. Let $X$ be a retractive $\capO$-algebra over $Y$. If $X$ is $0$-connected (rel. $Y$),  then the Bousfield-Kan completion maps 
\begin{align*}
  X\wequiv X^\wedge_{\Loopt_Y^r\Suspt_Y^r},\quad\quad (1\leq r\leq \infty)
\end{align*}
are weak equivalences. In particular, since $\Loopt_Y^\infty\Suspt_Y^\infty\wequiv P_1^Y(\id)$, where $P_1^Y(\id)$ denotes the 1-excisive approximation to the identity functor on $\capO$-algebras centered at $Y$, the Bousfield-Kan completion of $X$ with respect to $P_1^Y(\id)$ recovers $X$, up to weak equivalence.
\end{thm}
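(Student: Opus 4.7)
The plan is to adapt the cubical and cosimplicial convergence argument of \cite{Blomquist} (for finite $r$) and \cite{Ching_Harper_derived_Koszul_duality} (for $r=\infty$) from the null-centered setting to the $Y$-centered setting $\AlgOY$. Write $\mathsf{T}_r = \Loopt_Y^r\Suspt_Y^r$ for the cotriple on $\AlgOY$ arising from the derived adjunction $(\Suspt_Y^r,\Loopt_Y^r)$; then $X^\wedge_{\mathsf{T}_r}$ is computed by $\holim_\Delta \mathsf{T}_r^{\bullet+1}X$, and the task is to show that the natural map $X\to\holim_\Delta \mathsf{T}_r^{\bullet+1}X$ is a weak equivalence whenever $X$ is $0$-connected (rel.\ $Y$). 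The first step is to set up this cosimplicial resolution in $\AlgOY$ and verify the standard homotopical properties needed for Bousfield-Kan completion to make sense, using the identification $\AlgOY \iso \id_Y\downarrow(\AlgO\downarrow Y)$ and the inherited cofibrantly generated simplicial model structure on $\AlgOY$.

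The heart of the argument is to establish connectivity and cube-type estimates in $\AlgOY$ mirroring those used in the null-centered case. Concretely, one needs: (i) if $X$ is $k$-connected (rel.\ $Y$) then $\Suspt_Y X$ is $(k+1)$-connected (rel.\ $Y$) and $\Suspt_Y^\infty X$ is $k$-connected as a spectrum, with analogous estimates for $\Loopt_Y$ and $\Loopt_Y^\infty$; and (ii) a relative Blakers-Massey / higher-order cube theorem that controls the cartesian defect of strongly cocartesian cubes in $\AlgOY$ built from maps that are highly connected (rel.\ $Y$). These should be transported from the absolute cube-calculus machinery of \cite{Ching_Harper} by using that $Y$ is $(-1)$-connected, cofibrant, and fibrant, so that the forgetful functor $\AlgOY\to\AlgO$ interacts well with pushouts, pullbacks, and connectivity. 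With these ingredients in place, the cosimplicial convergence argument runs formally as in \cite{Blomquist,Ching_Harper_derived_Koszul_duality}: for $X$ that is $0$-connected (rel.\ $Y$), one inductively shows that the fibers of the tower $\{\Tot_s\mathsf{T}_r^{\bullet+1}X\}_s$ become arbitrarily highly connected (rel.\ $Y$) as $s\to\infty$, forcing the natural map $X\to\holim_\Delta \mathsf{T}_r^{\bullet+1}X$ to be a weak equivalence. The final clause of the theorem then follows from the standard identification $\Loopt_Y^\infty\Suspt_Y^\infty\wequiv P_1^Y(\id)$ coming from Goodwillie calculus centered at $Y$.

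The main obstacle I expect is verifying the relative Blakers-Massey estimate in step (ii): the proofs in \cite{Ching_Harper} measure connectivity from the null object of $\AlgO$, and we must re-derive the cube estimates with connectivity measured relative to $Y$. Cofibrancy of $Y$ is the key technical hook, since it guarantees that pushouts and pullbacks in $\AlgOY$ behave homotopically as in $\AlgO$, and that the forgetful functor detects and reflects the cube-style connectivities we need. Once this relative machinery is in place, the remainder of the argument is a direct transposition of the null-centered proofs, now carried out inside $\AlgOY$.
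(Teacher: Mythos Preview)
Your plan is essentially the same as the paper's: set up the derived cosimplicial resolution by $\Loopt_Y^r\Suspt_Y^r$, prove cubical connectivity estimates via higher Blakers--Massey, and conclude that the map from $X$ into the $n$-th stage of the $\Tot$ tower is at least $(n+2)$-connected, hence the tower converges to $X$. The paper packages the inductive cube estimate as a single statement (if an $n$-cube $\capX$ is $(\id+1)(k+1)$-cartesian in $\AlgOY$, then so is the $(n+1)$-cube $\capX\to\Loopt_Y^r\Suspt_Y^r\capX$) and then reads off the coface-cube cartesian-ness directly; your ``fibers of the tower become highly connected'' is the same statement in different clothing.

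One clarification on what you flag as the main obstacle. You do \emph{not} need to re-derive a ``relative'' Blakers--Massey measuring connectivity from $Y$. The forgetful functor $\AlgOY\to\AlgO$ creates limits and colimits, so a $W$-cube in $\AlgOY$ is $k$-cartesian (resp.\ $k$-cocartesian) if and only if its image in $\AlgO$ is; and ``$k$-connected (rel.\ $Y$)'' for an object just means the structure map $Y\to X$ is $k$-connected in $\AlgO$, which is exactly the kind of edge-connectivity input the absolute higher Blakers--Massey of \cite{Ching_Harper} already takes. Cofibrancy of $Y$ is used only to ensure the derived suspension and loop constructions in $\AlgOY$ are computed by honest pushouts and pullbacks along $Y$, not to modify the cube calculus itself. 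So step~(ii) in your outline reduces immediately to quoting \cite[1.7, 1.11]{Ching_Harper} for cubes that happen to live over and under $Y$.
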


In the special case when $Y=*$, Theorem \ref{main_theorem_B} is proved in \cite{Schonsheck_fibration} for $r=\infty$ (using an approach closely related to ideas in \cite{Bousfield_Kan}) and Theorem \ref{main_theorem_C} is proved in \cite{Schonsheck_TQ} (using ideas closely related to \cite{Arone_Kankaanrinta}). We observe that these results generalize to the context of $\capO$-algebras centered at $Y$.

\begin{thm}
\label{main_theorem_B}
Assume that $\capR,\capO,Y$ are $(-1)$-connected and $\capO[0]=*$. Consider any fibration sequence of the form
\begin{align*}
  F\rightarrow E\rightarrow B
\end{align*}
in retractive $\capO$-algebras over $Y$. If $E,B$ are 0-connected (rel. $Y)$, then the Bousfield-Kan completion maps
\begin{align}
  \label{eq:bousfield_kan_completion_maps_theorem_B}
  F\wequiv F^\wedge_{\Loopt_Y^r\Suspt_Y^r},\quad\quad (1\leq r\leq \infty)
\end{align}
are weak equivalences. More generally, consider any $\infty$-cartesian 2-cube of the form
\begin{align*}
\xymatrix{
  F\ar[r]\ar[d] & E\ar[d]\\
  Z\ar[r] & B
}
\end{align*}
in retractive $\capO$-algebras over $Y$. If $E,B,Z$ are 0-connected (rel. $Y$), then the Bousfield-Kan completion maps \eqref{eq:bousfield_kan_completion_maps_theorem_B} are weak equivalences.
\end{thm}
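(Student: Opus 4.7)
The plan is to reduce Theorem \ref{main_theorem_B} to Theorem \ref{main_theorem_A} by exploiting the fact that the Bousfield-Kan completion functor $(-)^\wedge_{\Loopt_Y^r\Suspt_Y^r}$, built as a $\Tot$ of a cosimplicial cobar construction, is a homotopy limit and therefore commutes with homotopy pullbacks in $\AlgOY$.

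First I would observe that the fibration sequence statement is a special case of the $\infty$-cartesian 2-cube statement: a fibration sequence $F\rightarrow E\rightarrow B$ in $\AlgOY$ gives the $\infty$-cartesian 2-cube
\begin{align*}
\xymatrix{
  F\ar[r]\ar[d] & E\ar[d]\\
  Y\ar[r] & B
}
\end{align*}
whose bottom-left vertex is the null object of $\AlgOY$ (the identity factorization $Y=Y=Y$), which is trivially 0-connected (rel. $Y$) since its structure map is $\id_Y$. So it suffices to establish the 2-cube version.

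For the 2-cube version, I would apply $(-)^\wedge_{\Loopt_Y^r\Suspt_Y^r}$ vertex-wise to the given $\infty$-cartesian 2-cube. Writing $(-)^\wedge$ as shorthand for $(-)^\wedge_{\Loopt_Y^r\Suspt_Y^r}$, the key claim is that the completed 2-cube with corners $F^\wedge, E^\wedge, Z^\wedge, B^\wedge$ is again $\infty$-cartesian, which follows from the commutation of $\Tot$ with homotopy pullback. By Theorem \ref{main_theorem_A}, the three 0-connected corners satisfy $E\wequiv E^\wedge$, $B\wequiv B^\wedge$, and $Z\wequiv Z^\wedge$, so the homotopy pullback of the completed face $E^\wedge\rightarrow B^\wedge\leftarrow Z^\wedge$ is weakly equivalent to that of the original face $E\rightarrow B\leftarrow Z$, which is $F$ by $\infty$-cartesianness of the input cube. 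Naturality of the completion map $F\rightarrow F^\wedge$ then forces $F\wequiv F^\wedge$.

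The main technical obstacle will be justifying rigorously that $(-)^\wedge_{\Loopt_Y^r\Suspt_Y^r}$ commutes with homotopy pullback in the retractive category $\AlgOY$. Concretely, this involves (i) choosing Reedy fibrant replacements of the cosimplicial resolutions defining $(-)^\wedge_{\Loopt_Y^r\Suspt_Y^r}$ so that $\Tot$ agrees with $\holim$; (ii) verifying that this commutation interacts well with the retractive structure over $Y$, in particular that the model-categorical constructions in $\AlgOY$ inherited from $\AlgO$ preserve the $\infty$-cartesian property under completion; and (iii) ensuring the argument works uniformly across $1\leq r\leq\infty$. I would model the argument on the $Y=*$ treatments in \cite{Schonsheck_fibration,Schonsheck_TQ}, adapting each step to the retractive setting via the homotopical infrastructure from \cite{Ching_Harper, Harper_Zhang}.
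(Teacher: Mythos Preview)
There is a genuine gap in your argument. The commutation of $\Tot$ with homotopy pullback only tells you that $\holim(E^\wedge\to B^\wedge\leftarrow Z^\wedge)$ agrees with $\holim_\Delta$ of the \emph{levelwise} homotopy pullback of the cosimplicial resolutions of $E,B,Z$. For your conclusion you would need that levelwise pullback to be the cosimplicial resolution of $F$, i.e.\ that $(\Loopt_Y^r\Suspt_Y^r)^{n+1}$ preserves the $\infty$-cartesian square for each $n$. But $\Loopt_Y^r\Suspt_Y^r$ does \emph{not} preserve $\infty$-cartesian squares: the factor $\Suspt_Y^r$ is a homotopy colimit construction and destroys cartesianness (only $\id$-cartesianness is preserved, not $\infty$-cartesianness---this is essentially a Blakers--Massey phenomenon). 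So the square with corners $(\Loopt_Y^r\Suspt_Y^r)^{n+1}F$, $(\Loopt_Y^r\Suspt_Y^r)^{n+1}E$, $(\Loopt_Y^r\Suspt_Y^r)^{n+1}Z$, $(\Loopt_Y^r\Suspt_Y^r)^{n+1}B$ is typically \emph{not} $\infty$-cartesian, and there is no direct identification of $F^\wedge$ with the pullback of the completed corners.

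The paper's proof is more elaborate precisely to bypass this. It first forms the levelwise homotopy fiber $\tilde{F}^\bullet$ of the resolutions of $E$ and $B$ (so $\tilde{F}^n\not\simeq(\Loopt_Y^r\Suspt_Y^r)^{n+1}F$ in general), observes $F\simeq\holim_\Delta\tilde{F}$ by Theorem~\ref{main_theorem_A} applied to $E,B$ and commutation of homotopy fibers with $\holim_\Delta$, and then resolves \emph{each} $\tilde{F}^n$ again to build a bicosimplicial object. A Fubini argument then compares the two orders of taking $\holim_\Delta$: one order yields $F^\wedge$ (using the connectivity estimates of Theorem~\ref{thm:main_theorem_homotopical_estimates} and that $\Loopt_Y^r\Suspt_Y^r$ preserves $\id$-cartesian cubes), the other yields $F$ (using extra codegeneracies $s^{-1}$ in the columns coming from the fact that $\Loopt_Y^r$ commutes with homotopy fibers). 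Your reduction of the fibration statement to the $2$-cube statement is fine, but the core step needs this two-variable resolution, not a one-line commutation of limits.
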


\begin{thm}
\label{main_theorem_C}
Assume that $\capR,\capO,Y$ are $(-1)$-connected and $\capO[0]=*$. Let $X$ be a retractive $\capO$-algebra over $Y$. If $X$ is $(-1)$-connected (rel. $Y$) and $P_1^Y(\id)X$ is 0-connected (rel. $Y$), then the homotopy limit of the Taylor tower of the identity functor on $\capO$-algebras centered at $Y$ (and evaluated at $X$) is weakly equivalent
\begin{align*}
  P_\infty^Y(\id)X\wequiv X^\wedge_{\Loopt_Y^\infty\Suspt_Y^\infty}
\end{align*}
to the Bousfield-Kan completion of $X$ with respect to $P_1^Y(\id)$.
\end{thm}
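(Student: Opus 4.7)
The plan is to adapt the strategy used to prove the $Y=*$ case in \cite{Schonsheck_TQ}, transferring it to the retractive setting by working systematically in $\AlgOY$. Abbreviate $T := \Loopt_Y^\infty\Suspt_Y^\infty$; via the identification $T \simeq P_1^Y(\id)$, the functor $T$ inherits the structure of a monad on (appropriately connected) retractive $\capO$-algebras over $Y$, and the Bousfield-Kan $T$-completion $X^\wedge_T$ is then modeled as the homotopy limit of the cosimplicial resolution $T^{\bullet+1}X$.

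The first step is to construct, at each finite stage $n$, a natural comparison map between the $n$-th Taylor polynomial $P_n^Y(\id)X$ and the $n$-th partial totalization $\Tot_n T^{\bullet+1}X$, compatibly with the tower structure on both sides. In the non-retractive case one expresses $P_n^Y(\id)X$ directly in terms of iterates of $T$, following the Arone-Kankaanrinta-style ideas used in \cite{Schonsheck_TQ}. In the retractive setting one replaces smash products over $\capR$ with their $Y$-relative analogues inside the factorization category $\AlgOY$, but the formal structure of the comparison should carry over. The second step is a connectivity argument: under the hypothesis that $P_1^Y(\id)X$ is 0-connected (rel. $Y$), one verifies that the homotopy fibers of both $P_n^Y(\id)X \rarrow P_{n-1}^Y(\id)X$ and $\Tot_n T^{\bullet+1}X \rarrow \Tot_{n-1}T^{\bullet+1}X$ have connectivity tending to infinity with $n$. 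The former is the standard estimate on the layers of the Taylor tower; the latter rests on higher Blakers-Massey-type arguments in $\AlgOY$, of the same flavor as those underpinning Theorems \ref{main_theorem_A} and \ref{main_theorem_B}. Passing to the homotopy limit yields the stated weak equivalence $P_\infty^Y(\id)X \wequiv X^\wedge_T$.

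The main obstacle is rigorously executing the first step in the retractive setting: the combinatorics relating $P_n^Y(\id)$ to iterates $T^k$ must be identified explicitly inside $\AlgOY$, with proper bookkeeping for the role of $Y$ as the basepoint replacing the null object. Once that identification is in place, the connectivity estimates adapt directly from the non-retractive case, and Theorem \ref{main_theorem_A} (applied to $P_1^Y(\id)X$, which by hypothesis is 0-connected rel. $Y$) can be invoked at intermediate stages to close the argument.
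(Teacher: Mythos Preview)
Your proposal has a genuine logical gap in how the two steps combine. Constructing a comparison map and then showing separately that \emph{both} towers $\{P_n^Y(\id)X\}$ and $\{\Tot_n T^{\bullet+1}X\}$ have layers of increasing connectivity does not, by itself, imply that their homotopy limits agree. Two towers can each have highly connected layers yet converge to entirely different objects; what you would need is that the comparison map \emph{itself} has connectivity tending to infinity, or that it identifies the layers. Your outline never asserts or argues either of these.

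The paper does not attempt a direct stage-by-stage comparison. Instead it builds a bi-tower: take the completion tower $\{(\Loopt^\infty_Y\Suspt^\infty_Y)_n(X)\}_n$ and resolve each entry by its Taylor tower centered at $Y$, producing a doubly-indexed diagram $\{P_m^Y(\Loopt^\infty_Y\Suspt^\infty_Y)_n(X)\}_{m,n}$. The argument is then a Fubini-style computation of the double homotopy limit in two orders. Computing horizontally first uses the key input you are missing: Theorem~\ref{thm:main_theorem_homotopical_estimates} shows the map $\id \to (\Loopt^\infty_Y\Suspt^\infty_Y)_n$ satisfies $O_{n+1}(0,1)$, hence $P_m^Y(\id)(X)\xrightarrow{\simeq} P_m^Y(\Loopt^\infty_Y\Suspt^\infty_Y)_n(X)$ for all $m\leq n+1$, so the rows stabilize to $\{P_m^Y(\id)(X)\}_m$ and one obtains $P_\infty^Y(\id)(X)$. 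Computing vertically first uses the hypothesis that $\Suspt_Y^\infty X$ is $0$-connected: iterated Blakers-Massey estimates show $(\Loopt^\infty_Y\Suspt^\infty_Y)_k(X)\to P_{n+k}^Y(\Loopt^\infty_Y\Suspt^\infty_Y)_k(X)$ is $(n+2)$-connected, so the vertical limits recover the completion tower and one obtains $X^\wedge_T$. The ``agreement to order $n+1$'' statement coming from Theorem~\ref{thm:main_theorem_homotopical_estimates} is the substantive bridge between the Taylor tower and the completion tower, and it is absent from your sketch. Your suggestion to invoke Theorem~\ref{main_theorem_A} on $P_1^Y(\id)X$ is not how that hypothesis is actually used; the $0$-connectivity of $P_1^Y(\id)X$ enters only to launch the Blakers-Massey connectivity estimates on the iterates $(\Loopt^\infty_Y\Suspt^\infty_Y)^k(X)$ for $k\geq 2$.
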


\begin{rem}
\label{rem:about_stuff_useful}
We know from \cite{Ching_Harper} that the identity functor is 0-analytic, but not $(-1)$-analytic, hence it follows (\cite{Goodwillie_calculus_2, Goodwillie_calculus_3}) that any 0-connected (rel $Y$) $X\in\AlgOY$ satisfies $X\wequiv P_\infty^Y(\id)X$; we also know by Theorem \ref{main_theorem_A} that such an $X$ satisfies $X\wequiv X^\wedge_{\Loopt_Y^\infty\Suspt_Y^\infty}$. On the other hand, when $X$ is $(-1)$-connected (rel. $Y$) but its stabilization $P_1^Y(\id)X$ is 0-connected (rel. $Y$), then Theorem \ref{main_theorem_C} provides an interesting convergence result for the Taylor tower of the identity functor. It is worth pointing out that $*_Y\rightarrow\Loopt_Y^\infty\Suspt_Y^\infty X$ is 0-connected in $\AlgOY$ if and  only if $*_Y\rightarrow\Suspt_Y^\infty X$ is 0-connected in $\SpectraN(\AlgOY)$; e.g., by the (derived) triangle identities.
\end{rem}

Our results are enabled by the homotopical resolutions studied in \cite{Blumberg_Riehl}, together with Hovey spectra \cite{Hovey_spectra} of retractive $\capO$-algebras over $Y$ by work on Bousfield localization in model structures that are not left proper; see, for instance, \cite{Goerss_Hopkins_moduli_problems} and \cite{Harper_Zhang}, and the subsequent work of \cite{Batanin_White} and \cite{Carmona}.

\section{Homotopical estimates}

The purpose of this section is to prove Theorem \ref{thm:main_theorem_homotopical_estimates} which provides the detailed homotopical estimates underlying each of our main results. Since making these types of estimates may not be so familiar, we ease the reader into the various higher Blakers-Massey (and its dual) arguments, before building up to a proof of Theorem \ref{thm:main_theorem_homotopical_estimates}. Readers who would like to first see proofs of the main results are invited to look ahead to Section \ref{sec:proofs_of_main_results}.

Denote by $*_Y:=Y$ the null object (or point) in the pointed category $\AlgOY$. It will sometimes be conceptually and notationally convenient to denote by $*_Y'\wequiv *_Y$ an appropriately fattened up version of the point $*_Y$ in $\AlgOY$; this will not cause any confusion. Let's explore the behavior of derived suspension $\Suspt_Y$ (\cite[10.4]{Dwyer_Spalinski}, \cite[I.2]{Quillen}) on objects in $\AlgOY$.

\begin{prop}
Let $k\geq -1$. If $X\in\AlgOY$ is $k$-connected (rel. $Y$), then $\Suspt_Y X$ is $(k+1)$-connected (rel. $Y$).
\end{prop}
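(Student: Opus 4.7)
The plan is to realize $\Suspt_Y X$ as a homotopy pushout and then invoke a pushout-preserves-connectivity estimate for $\capO$-algebras. By the definition of derived suspension in the pointed model category $\AlgOY$, we have a homotopy cocartesian square
\begin{align*}
\xymatrix{
X^c \ar[r] \ar[d] & *_Y' \ar[d]\\
*_Y' \ar[r] & \Suspt_Y X
}
\end{align*}
in $\AlgOY$ (equivalently, in $\AlgO$), where $X^c$ denotes a cofibrant replacement of $X$ and both parallel arrows $X^c \to *_Y'$ are (models of) the structural retraction $X \to Y = *_Y$.

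The first key step is to upgrade the hypothesis to a connectivity estimate for the retraction itself. Because the composite $Y \to X \to Y$ is the identity, the induced map on each $\pi_j$ is a split surjection; combined with the assumption that $Y \to X$ is $k$-connected, this forces $\pi_j(X) \cong \pi_j(Y)$ for all $j \leq k$. Consequently the retraction $X \to Y$ is a $\pi_j$-isomorphism for $j \leq k$ and surjective on every higher $\pi_j$, i.e., it is $(k+1)$-connected as a map in $\AlgO$.

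The second step is to apply the fact that, in $\AlgO$, the cobase change of a $(k+1)$-connected map along any map is again $(k+1)$-connected on the opposite face of the homotopy pushout; this is a consequence of the higher Blakers--Massey theorem for operadic algebras developed in \cite{Ching_Harper}. Applied to the displayed square, this yields that the canonical map $*_Y' \to \Suspt_Y X$ is $(k+1)$-connected, which is precisely the statement that $\Suspt_Y X$ is $(k+1)$-connected relative to $Y$.

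The main obstacle is ensuring that this pushout-preserves-connectivity principle transfers faithfully between $\AlgO$ and the retractive setting $\AlgOY$; since the underlying homotopy pushout is computed in $\AlgO$ and the notion of connectivity of a map there is insensitive to any ambient basepoint, this causes no difficulty. In the special case $Y=*$, the argument specializes to the familiar fact that derived suspension of pointed $\capO$-algebras increases connectivity by one.
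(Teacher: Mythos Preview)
Your proof is correct and follows essentially the same route as the paper: form the defining homotopy pushout square for $\Suspt_Y X$, observe that the maps $X\to *_Y'$ are $(k+1)$-connected, and then use that cobase change in $\AlgO$ preserves connectivity (the paper cites \cite[1.4]{Ching_Harper} for this last step). The only difference is cosmetic: you spell out, via the retract structure and homotopy groups, why $X\to *_Y'$ is $(k+1)$-connected, while the paper treats the equivalence ``$*_Y\to X$ is $k$-connected'' $\Leftrightarrow$ ``$X\to *_Y$ is $(k+1)$-connected'' as tautological in $\AlgOY$ (cf.\ the discussion of $0$-cubes preceding Theorem~\ref{thm:main_theorem_for_special_case_of_0_connected}).
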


\begin{proof}
It suffices to assume that $X$ is cofibrant in $\AlgOY$. Consider a pushout cofibration 2-cube of the form
\begin{align}
\label{eq:suspension_increases_connectivity_by_one}
\xymatrix{
  X\ar[r]^-{(*)}\ar[d]_-{(*)} & {*}_Y'\ar[d]^-{(\#)}\\
  {*}_Y'\ar[r]^-{(\#)} & \Suspt_Y X
}
\end{align}
in $\AlgOY$. By assumption, we know that $*_Y\rightarrow X$ is $k$-connected, hence the maps $(*)$ are $(k+1)$-connected. Since the 2-cube is $\infty$-cocartesian by construction, it follows that the maps $(\#)$ are $(k+1)$-connected; see, for instance, \cite[1.4]{Ching_Harper}.
\end{proof}

Let's explore the behavior of $\Suspt_Y$ on maps in $\AlgOY$; this will provide a useful warmup in using cocartesian-ness estimates.

\begin{prop}
Let $n\geq -1$. Consider any $n$-connected map $A\rightarrow B$ in $\AlgOY$. Assume that $A,B$ are $(-1)$-connected (rel. $Y$). Then the induced map $\Suspt_Y A\rightarrow\Suspt_Y B$ is $(n+1)$-connected.
\end{prop}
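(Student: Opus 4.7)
My plan is to reduce to Proposition 2.1 by forming the homotopy cofiber of $f$, then transferring connectivity across parallel edges of $\infty$-cocartesian 2-cubes. First, I would assume (without loss of generality) that $A,B$ are cofibrant and $f\colon A\rightarrow B$ is a cofibration in $\AlgOY$, and form the strongly (hence $\infty$-)cocartesian 2-cube
\begin{align*}
\xymatrix{
A\ar[r]^{f}\ar[d] & B\ar[d]\\
*_Y'\ar[r] & C
}
\end{align*}
in $\AlgOY$, with $C$ the homotopy cofiber of $f$. Since parallel edges of an $\infty$-cocartesian 2-cube have equivalent (homotopy) cofibers, an application of \cite[1.4]{Ching_Harper} gives that the bottom edge $*_Y'\rightarrow C$ is also $n$-connected; equivalently, $C$ is $n$-connected (rel. $Y$).

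Next, I would apply the preceding proposition (with $k=n\geq -1$) to conclude that $\Suspt_Y C$ is $(n+1)$-connected (rel. $Y$); that is, the map $*_Y'\rightarrow\Suspt_Y C$ is $(n+1)$-connected.

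Finally, since derived suspension $\Suspt_Y$ is a homotopy colimit construction (the derived pushout of $*_Y'\leftarrow(-)\rightarrow *_Y'$), it preserves $\infty$-cocartesian 2-cubes. Applying $\Suspt_Y$ to the square above, and using that $\Suspt_Y *_Y'\wequiv *_Y'$, produces an $\infty$-cocartesian 2-cube
\begin{align*}
\xymatrix{
\Suspt_Y A\ar[r]^{\Suspt_Y f}\ar[d] & \Suspt_Y B\ar[d]\\
*_Y'\ar[r] & \Suspt_Y C
}
\end{align*}
in $\AlgOY$. One more invocation of \cite[1.4]{Ching_Harper} then transfers the $(n+1)$-connectivity of the bottom edge to the parallel top edge $\Suspt_Y f$, completing the argument.

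The crux of the proof is the twofold connectivity-transfer across parallel edges of a cocartesian 2-cube, and in each case this is handled directly by \cite[1.4]{Ching_Harper}; the $(-1)$-connectedness hypothesis on $A,B$ keeps us in the range where the preceding proposition applies, and ensures the same holds for $C$ so that $\Suspt_Y$ genuinely improves connectivity by one.
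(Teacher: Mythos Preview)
Your argument has a genuine gap in the final step. The inference you make there---transferring the $(n+1)$-connectivity of the bottom edge $*_Y'\to\Suspt_Y C$ back up to the parallel top edge $\Suspt_Y f$---is not what \cite[1.4]{Ching_Harper} provides. As the paper uses it (e.g., in the proof of the preceding proposition), that result says that \emph{cobase change} preserves connectivity: in an $\infty$-cocartesian square, connectivity propagates from an edge out of the initial vertex to the parallel edge into the terminal vertex, not the other way around. Equivalently, in the language of \cite[3.8]{Ching_Harper} (i.e., \cite[1.7]{Goodwillie_calculus_2}): writing an $\infty$-cocartesian $2$-cube as a map of $1$-cubes $\capY\to\capZ$, one may deduce $k$-cocartesian-ness of the \emph{target} face $\capZ$ from that of the square together with the \emph{source} face $\capY$, but there is no inference in the reverse direction. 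In your suspended square the top row is the source $\capY$ and the bottom row is the target $\capZ$, so the direction you need is unavailable. Your first transfer (top to bottom in the original square) is fine precisely because it \emph{is} cobase change; the second is not.

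The underlying issue is that $\AlgOY$ is not stable, so ``the homotopy cofiber of a map is $(n+1)$-connected (rel.\ $Y$)'' does not by itself force the map to be $(n+1)$-connected; the fact that parallel edges of an $\infty$-cocartesian square have equivalent cofibers is therefore not enough. The paper's proof avoids this trap by building a $3$-cube from the suspension squares of $A$ and $B$ and arguing entirely with cocartesian-ness estimates in the correct direction: the $3$-cube is $\infty$-cocartesian, the left $2$-face (the source) is shown to be $(n+1)$-cocartesian, hence the right $2$-face (the target) is $(n+1)$-cocartesian; and in that right $2$-face the top row $*_Y'\to *_Y'$ is the source and is $\infty$-cocartesian, so one can pass once more to the target row $\Suspt_Y A\to\Suspt_Y B$. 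Each step is a source-to-target transfer, which is exactly what \cite[3.8]{Ching_Harper} licenses.
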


\begin{proof}
One way to verify this is to make systematic use of cocartesian-ness estimates. It suffices to assume that $A,B$ are cofibrant in $\AlgOY$. Consider the induced 3-cube of the form
\begin{align*}
\xymatrix@!0{
A\ar[rr]\ar[dd]\ar[dr] &&
{*_Y'}\ar[dr]\ar'[d][dd]\\
&B\ar[rr]\ar[dd] &&
{*_Y'}\ar[dd]\\
{*_Y'}\ar[dr]\ar'[r][rr] &&
\Suspt_Y A\ar[dr]\\
&{*_Y'}\ar[rr] &&
\Suspt_Y B
}
\end{align*}
in $\AlgOY$ where the front and back 2-faces are pushout cofibration 2-cubes (and hence $\infty$-cocartesian). It follows from \cite[3.8]{Ching_Harper} (e.g., \cite[1.7]{Goodwillie_calculus_2}) that the 3-cube is $\infty$-cocartesian. Consider the left-hand 2-face of the form
\begin{align*}
\xymatrix{
  A\ar[r]\ar[d] & B\ar[d]\\
  {*_Y'}\ar[r] & {*_Y'}
}
\end{align*}
Since the upper map is $n$-connected by assumption and the bottom map is $\infty$-connected (i.e., a weak equivalence), it follows from \cite[3.8]{Ching_Harper} (e.g., \cite[1.7]{Goodwillie_calculus_2}) that this 2-face is $(n+1)$-cocartesian. Putting it all together, the 3-cube is $\infty$-cocartesian and the left-hand 2-face is $(n+1)$-cocartesian, hence the right-hand 2-face of the form
\begin{align*}
\xymatrix{
  {*_Y'}\ar[r]\ar[d] & {*_Y'}\ar[d]\\
  \Suspt_Y A\ar[r] & \Suspt_Y B
}
\end{align*}
is $(n+1)$-cocartesian; since the upper map is $\infty$-connected, it follows from \cite[3.8]{Ching_Harper} (e.g., \cite[1.7]{Goodwillie_calculus_2}) that the bottom map is $(n+1)$-connected.
\end{proof}

Let's explore the behavior of derived loops $\Loopt_Y$ (\cite[10.8]{Dwyer_Spalinski}, \cite[I.2]{Quillen}) on objects in $\AlgOY$.

\begin{prop}
Let $k\in\ZZ$. If $X\in\AlgOY$ is $k$-connected (rel. $Y$), then $\Loopt_Y X$ is $(k-1)$-connected (rel. $Y$).
\end{prop}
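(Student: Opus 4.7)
The plan is to dualize the proof of the previous proposition, replacing the pushout cofibration 2-cube there with a pullback fibration 2-cube and using cartesian-ness estimates in place of cocartesian-ness estimates. First, I would reduce to the case where $X$ is fibrant in $\AlgOY$ (harmless since $\Loopt_Y$ is the total right derived functor of $\Loop_Y$). Then I would form the pullback fibration 2-cube
\begin{align*}
\xymatrix{
  \Loopt_Y X\ar[r]^-{(\#)}\ar[d]_-{(\#)} & {*}_Y'\ar[d]^-{(*)}\\
  {*}_Y'\ar[r]^-{(*)} & X
}
\end{align*}
in $\AlgOY$, which is $\infty$-cartesian by construction.

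Next, I would observe that since $*_Y\to X$ is $k$-connected by hypothesis, the $(*)$ maps (each modeling the structure map of $X$, up to weak equivalence) are $k$-connected. Applying the pullback (dual) form of \cite[1.4]{Ching_Harper} to the $\infty$-cartesian 2-cube, I would then deduce that the $(\#)$ maps $\Loopt_Y X\to{*}_Y'$ are $k$-connected as well.

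To finish, I would invoke the same section/retraction shift implicit in the previous proposition---where a $k$-connected section $*_Y\to X$ was paired with a $(k+1)$-connected retraction $X\to{*}_Y'$---but applied in reverse, so that the $k$-connected retraction $\Loopt_Y X\to{*}_Y'$ is paired with a $(k-1)$-connected section $*_Y\to\Loopt_Y X$. This gives that $\Loopt_Y X$ is $(k-1)$-connected (rel.\ $Y$). The main (minor) point requiring verification is this retraction-to-section shift in the last step, but it is the direct dual of the shift used in the suspension argument and follows from the same observations about the retractive structure (concretely, the homotopy fiber of the retraction agrees up to a one-fold loop with the homotopy fiber of the section).
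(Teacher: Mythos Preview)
Your proposal is correct and follows essentially the same approach as the paper: reduce to $X$ fibrant, form the pullback fibration $2$-cube, use $\infty$-cartesianness to transfer the $k$-connectivity of the $(*)$ maps to the $(\#)$ maps, and then apply the section/retraction connectivity shift to conclude. The paper's proof is terser---it simply asserts the final shift without justification---so your added remark that $\mathrm{hofib}(*_Y\to\Loopt_Y X)\simeq\Loopt_Y\,\mathrm{hofib}(\Loopt_Y X\to *_Y)$ is a welcome clarification, not a deviation.
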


\begin{proof}
It suffices to assume that $X$ is fibrant in $\AlgOY$. Consider a pullback fibration 2-cube of the form
\begin{align*}
\xymatrix{
  \Loopt_Y X\ar[r]^-{(\#)}\ar[d]_-{(\#)} & {*}_Y'\ar[d]^-{(*)}\\
  {*}_Y'\ar[r]^-{(*)} & X
}
\end{align*}
in $\AlgOY$. By assumption, we know that $*_Y\rightarrow X$ is $k$-connected, hence the maps $(*)$ are $k$-connected. Since the 2-cube is $\infty$-cartesian by construction, it follows that the maps $(\#)$ are $k$-connected.  Hence the map $*_Y\rightarrow\Loopt_Y X$ is $(k-1)$-connected.
\end{proof}

Let's explore the behavior of $\Loopt_Y$ on maps in $\AlgOY$; this will provide a useful warmup in using cartesian-ness estimates.

\begin{prop}
Let $n\in\ZZ$. Consider any $n$-connected map $A\rightarrow B$ in $\AlgOY$. Then the induced map $\Loopt_Y A\rightarrow\Loopt_Y B$ is $(n-1)$-connected.
\end{prop}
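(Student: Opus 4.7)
The plan is to dualize the preceding proof for $\Suspt_Y$, trading cocartesian-ness estimates for cartesian-ness estimates. First I would reduce to the case where $A$ and $B$ are fibrant in $\AlgOY$, so that the pullback fibration $2$-cubes defining $\Loopt_Y A$ and $\Loopt_Y B$ are honestly $\infty$-cartesian.

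Next, I would assemble these two pullback squares into the induced $3$-cube
\begin{align*}
\xymatrix@!0{
\Loopt_Y A\ar[rr]\ar[dd]\ar[dr] &&
{*_Y'}\ar[dr]\ar'[d][dd]\\
&\Loopt_Y B\ar[rr]\ar[dd] &&
{*_Y'}\ar[dd]\\
{*_Y'}\ar[dr]\ar'[r][rr] &&
A\ar[dr]\\
&{*_Y'}\ar[rr] &&
B
}
\end{align*}
in $\AlgOY$, whose back and front $2$-faces are the defining pullback fibration $2$-cubes for $\Loopt_Y A$ and $\Loopt_Y B$ (hence $\infty$-cartesian). By the cartesian analog of \cite[3.8]{Ching_Harper} (e.g., \cite[1.7]{Goodwillie_calculus_2}), having two parallel $2$-faces $\infty$-cartesian forces the entire $3$-cube to be $\infty$-cartesian.

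Then I would read off the cartesian-ness of the right-hand $2$-face
\begin{align*}
\xymatrix{
{*_Y'}\ar[r]\ar[d] & {*_Y'}\ar[d]\\
A\ar[r] & B
}
\end{align*}
Its homotopy pullback is $\hofib(A\rightarrow B)$, which is $(n-1)$-connected since $A\rightarrow B$ is $n$-connected, so the induced map from $*_Y'$ to this homotopy pullback is $(n-1)$-connected; equivalently, the $2$-face is $(n-1)$-cartesian. Combining this with the $3$-cube being $\infty$-cartesian, the dual cubical estimate then forces the left-hand $2$-face
\begin{align*}
\xymatrix{
\Loopt_Y A\ar[r]\ar[d] & {*_Y'}\ar[d]\\
\Loopt_Y B\ar[r] & {*_Y'}
}
\end{align*}
to also be $(n-1)$-cartesian. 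Since the right-hand vertical $*_Y'\rightarrow *_Y'$ is a weak equivalence, the homotopy pullback of this $2$-face simplifies to $\Loopt_Y B$, and so its cartesian-ness is exactly the connectivity of the induced map $\Loopt_Y A\rightarrow\Loopt_Y B$, delivering the desired bound.

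The only real obstacle is bookkeeping the $\pm 1$ connectivity conventions: a $k$-connected map has $(k-1)$-connected homotopy fiber, which produces the $(n-1)$ in the conclusion, in contrast to the $(n+1)$ shift seen in the dual $\Suspt_Y$ argument (where the cofiber of a $k$-connected map is $k$-connected and gains an additional $+1$ from the map to the null object). Once that fiber/cofiber dictionary is in place, every step is a formal dualization of the preceding proof.
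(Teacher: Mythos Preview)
Your proposal is correct and follows essentially the same approach as the paper: assemble the $3$-cube from the two $\infty$-cartesian pullback squares and then chase cartesian-ness estimates to deduce that the left-hand $2$-face is $(n-1)$-cartesian, from which the connectivity of $\Loopt_Y A\to\Loopt_Y B$ is read off. The paper simply compresses your intermediate steps (the $3$-cube being $\infty$-cartesian, the right-hand face being $(n-1)$-cartesian) into the phrase ``several applications of [Ching--Harper, 3.8]''; one minor correction is that the relevant cartesian reference in Goodwillie is Proposition~1.6 rather than~1.7.
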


\begin{proof}
One way to verify this is to make systematic use of cartesian-ness estimates. It suffices to assume that $A,B$ are fibrant in $\AlgOY$. Consider the induced 3-cube of the form
\begin{align*}
\xymatrix@!0{
\Loopt_Y A\ar[rr]\ar[dd]\ar[dr] &&
{*_Y'}\ar[dr]\ar'[d][dd]\\
&\Loopt_Y B\ar[rr]\ar[dd] &&
{*_Y'}\ar[dd]\\
{*_Y'}\ar[dr]\ar'[r][rr] &&
A\ar[dr]\\
&{*_Y'}\ar[rr] &&
B
}
\end{align*}
in $\AlgOY$ where the front and back 2-faces are pullback fibration 2-cubes (and hence $\infty$-cartesian). It follows from several applications of \cite[3.8]{Ching_Harper} (e.g., \cite[1.6]{Goodwillie_calculus_2}) that the left-hand 2-face is $(n-1)$-cartesian, and hence by another application of \cite[3.8]{Ching_Harper} (e.g., \cite[1.6]{Goodwillie_calculus_2}), we know that the map $\Loopt_Y A\rightarrow\Loopt_Y B$ is $(n-1)$-connected.
\end{proof}

The following, which appears in \cite[6.7]{Beardsley_Lawson}, can be understood as a consequence of Blakers-Massey \cite[1.5]{Ching_Harper} for $\AlgO$.

\begin{prop}
\label{prop:special_case_of_zero_cube}
Let $k\geq -1$. If $X\in\AlgOY$ is $k$-connected (rel. $Y$), then the map of the form $X\rightarrow\Loopt_Y\Suspt_Y X$ in $\AlgOY$ is $(2k+2)$-connected.
\end{prop}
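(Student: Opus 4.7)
The plan is to apply the higher Blakers--Massey theorem for $\AlgO$, in the form of \cite[1.5]{Ching_Harper}, directly to the defining pushout square for $\Suspt_Y X$. First I would assume without loss of generality that $X$ is cofibrant in $\AlgOY$, and then re-use exactly the pushout cofibration 2-cube
\begin{align*}
\xymatrix{
  X\ar[r]^-{(*)}\ar[d]_-{(*)} & {*}_Y'\ar[d]^-{(\#)}\\
  {*}_Y'\ar[r]^-{(\#)} & \Suspt_Y X
}
\end{align*}
from \eqref{eq:suspension_increases_connectivity_by_one}; this is $\infty$-cocartesian by construction. As in the proof of the first proposition of this section, the hypothesis that $*_Y\rightarrow X$ is $k$-connected implies that each of the two maps labelled $(*)$ is $(k+1)$-connected.

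The key step is then to feed this 2-cube into Blakers--Massey. Since it is $\infty$-cocartesian with both initial maps $(k+1)$-connected, \cite[1.5]{Ching_Harper} yields that the cube is $(2k+2)$-cartesian; equivalently, the natural comparison map from $X$ into the homotopy pullback of the cospan $*_Y'\rightarrow\Suspt_Y X\leftarrow *_Y'$ is $(2k+2)$-connected. But (up to appropriate fibrant replacement of $\Suspt_Y X$ in $\AlgOY$, and the same pullback construction used in the third proposition of this section) this homotopy pullback is precisely $\Loopt_Y\Suspt_Y X$, and the comparison map in question is the derived unit map $X\rightarrow\Loopt_Y\Suspt_Y X$ of the $(\Suspt_Y,\Loopt_Y)$-adjunction; this gives the desired $(2k+2)$-connectivity.

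The main obstacle I anticipate is purely bookkeeping: matching the numerical indexing convention of \cite[1.5]{Ching_Harper} so that two $(k+1)$-connected maps in an $\infty$-cocartesian 2-cube combine to exactly $(2k+2)$-cartesian, rather than being off by one in either direction. A secondary point worth checking is that the Blakers--Massey theorem of \cite{Ching_Harper}, stated for $\AlgO$, applies verbatim in the retractive category $\AlgOY$; this should be essentially immediate, since connectivity and the relevant homotopy pushouts and pullbacks in $\AlgOY$ agree with those computed after forgetting the retraction, and the previous propositions of this section already operate in this style.
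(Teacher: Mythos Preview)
Your proposal is correct and matches the paper's own proof essentially line for line: assume $X$ cofibrant, use the $\infty$-cocartesian pushout square \eqref{eq:suspension_increases_connectivity_by_one} with both initial maps $(k+1)$-connected, and apply Blakers--Massey \cite[1.5]{Ching_Harper} to conclude $(2k+2)$-cartesianness. The only cosmetic difference is that the paper writes out the minimum calculation explicitly (partition $\{1,2\}$ into $\{1,2\}$ versus $\{1\},\{2\}$), confirming the indexing you were worried about and yielding $\min(-1+\infty,\,(k+1)+(k+1))=2k+2$.
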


\begin{proof}
It suffices to assume that $X$ is cofibrant in $\AlgOY$. Consider a pushout cofibration 2-cube of the form \eqref{eq:suspension_increases_connectivity_by_one}. By assumption, we know that the maps $(*)$ are $(k+1)$-connected. Since the 2-cube is $\infty$-cocartesian by construction, it follows that the maps $(\#)$ are $(k+1)$-connected; see, for instance, \cite[1.4]{Ching_Harper}. To estimate the connectivity of the map $X\rightarrow\Loopt_Y\Suspt_Y X$, it suffices to estimate the cartesian-ness of the 2-cube. By Blakers-Massey \cite[1.5]{Ching_Harper} for $\AlgO$, we know that diagram \eqref{eq:suspension_increases_connectivity_by_one} is $l$-cartesian where $l$ is the minimum of
\begin{align*}
  -2 + l_{\{1,2\}} + 1 &= -1 + \infty \\
  -2 + l_{\{1\}} + 1 + l_{\{2\}} + 1 &= (k+1) + (k+1) = 2k+2
\end{align*}
Hence $l=2k+2$, our 2-cube is $(2k+2)$-cartesian, and hence the map $X\rightarrow\Loopt_Y\Suspt_Y X$ is $(2k+2)$-connected.
\end{proof}

The next step, on our way to proving Theorem \ref{thm:main_theorem_homotopical_estimates}, is to observe some low-dimensional patterns that naturally arise in the homotopical estimates; this will be useful. Let $\capX$ be a 0-cube in $\AlgOY$ of the form $\capX_\emptyset$ and consider the maps
\begin{align*}
  *_Y\xrightarrow{(*)} \capX_\emptyset\xrightarrow{(\#)} *_Y
\end{align*} 
in $\AlgO$. If $\capX_\emptyset$ is 0-connected (rel. $Y$), then $(*)$ is 0-connected and $(\#)$ is 1-connected; hence the 0-cube $\capX$ is 0-cocartesian and 1-cartesian in $\AlgOY$. These three conditions are tautologically the same: $\capX_\emptyset$ is 0-connected (rel. $Y$) if and only if the 0-cube $\capX$ is 1-cartesian in $\AlgOY$ if and only if the 0-cube $\capX$ is 0-cocartesian in $\AlgOY$. By Proposition \ref{prop:special_case_of_zero_cube}, the 1-cube of the form $\capX\rightarrow \Loopt_Y\Suspt_Y\capX$ in $\AlgOY$, which has the form
\begin{align}
\label{eq:1_cube_for_X_0_connected}
  \capX_\emptyset\rightarrow \Loopt_Y\Suspt_Y\capX_\emptyset
\end{align}
is 2-connected; furthermore, the 0-cubes are 0-connected (rel. $Y$). In other words, the 1-cube \eqref{eq:1_cube_for_X_0_connected} satisfies (i) the 0-subcubes are 1-cartesian and the 1-subcubes are 2-cartesian in $\AlgOY$ (i.e., the 1-cube \eqref{eq:1_cube_for_X_0_connected} is $(\id+1)$-cartesian in $\AlgOY$) and (ii) the 0-subcubes are 0-cocartesian and the 1-subcubes are 2-cocartesian (i.e., the 1-cube \eqref{eq:1_cube_for_X_0_connected} is $(2\cdot\id)$-cocartesian in $\AlgOY$). These conditions are tautologically the same: a 1-cube is $(\id+1)$-cartesian in $\AlgOY$ if and only if it is $(2\cdot\id)$-cocartesian in $\AlgOY$. 

With these observations, we can restate the $k=0$ case of Proposition \ref{prop:special_case_of_zero_cube} as follows: If the 0-cube $\capX$ is $(\id+1)$-cartesian in $\AlgOY$, then so is the 1-cube of the form $\capX\rightarrow\Loopt_Y\Suspt_Y\capX$.

\begin{rem}
Equivalently, we could have restated the $k=0$ case of Proposition \ref{prop:special_case_of_zero_cube} as follows: If the 0-cube $\capX$ is $(2\cdot\id)$-cocartesian in $\AlgOY$, then so is the 1-cube of the form $\capX\rightarrow\Loopt_Y\Suspt_Y\capX$.
\end{rem}

It turns out that this relationship persists for all higher dimensional $n$-cubes $\capX$ in $\AlgOY$ $(n\geq 0)$, beyond the $n=0$ case of Proposition \ref{prop:special_case_of_zero_cube} above.

\begin{thm}
\label{thm:main_theorem_for_special_case_of_0_connected}
Let $W$ be a finite set and $\capX$ a $W$-cube in $\AlgOY$. Let $n=|W|$. If the $n$-cube $\capX$ is $(\id+1)$-cartesian in $\AlgOY$, then so is the $(n+1)$-cube of the form $\capX\rightarrow\Loopt_Y\Suspt_Y\capX$.
\end{thm}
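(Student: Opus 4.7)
The plan is to build an enlarged $(n+2)$-cube $\capZ$ from $\capX$ by attaching a pushout square in two fresh directions, and then use the higher Blakers--Massey theorem \cite[1.5]{Ching_Harper} to read off the cartesian-ness of the $(n+1)$-cube $\capX\rightarrow\Loopt_Y\Suspt_Y\capX$ from the cocartesian estimates of $\capZ$. Concretely, after reducing to the case where $\capX$ is a cofibration $W$-cube in $\AlgOY$, form the $(n+2)$-cube $\capZ$ indexed by $W\cup\{a,b\}$ by setting $\capZ_{S\cup\emptyset}=\capX_S$, $\capZ_{S\cup\{a\}}=\capZ_{S\cup\{b\}}=\ast_Y'$, and $\capZ_{S\cup\{a,b\}}=\Suspt_Y\capX_S$ for each $S\subseteq W$. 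By construction every $\{a,b\}$-2-subface of $\capZ$ is a pushout cofibration square (hence $\infty$-cocartesian), and the homotopy pullback of such a square recovers $\Loopt_Y\Suspt_Y\capX_S$. Viewing $\capZ$ as a 2-cube of $n$-cubes in the $\{a,b\}$-direction, the $(n+1)$-cube $\capX\rightarrow\Loopt_Y\Suspt_Y\capX$ is precisely the canonical comparison map from the $\emptyset$-corner into the vertex-wise homotopy pullback of the other three corners.

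Next, I would bound the cocartesian-ness of every subcube of $\capZ$. By the reformulation in the paragraph preceding this theorem, $\capX$ is $(2\cdot\id)$-cocartesian; the $\{a\}$- and $\{b\}$-layers are constant at $\ast_Y'$ and thus $\infty$-cocartesian; and the $\{a,b\}$-layer $\Suspt_Y\capX$ inherits a $(2\cdot\id)$-cocartesian (in fact stronger) estimate since $\Suspt_Y$ is left Quillen, preserves homotopy colimits, and increases the connectivity of maps by one. Subcubes mixing $W$-directions with one or both of $a,b$ are handled by gluing these layer estimates along the $\infty$-cocartesian pushout faces via \cite[3.8]{Ching_Harper}. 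The higher Blakers--Massey theorem \cite[1.5]{Ching_Harper} then converts the $(2\cdot\id)$-cocartesian estimates on subcubes of $\capZ$ into $(\id+1)$-cartesian estimates on the corresponding subcubes; this is consistent with and generalizes the numerology verified in the proof of Proposition \ref{prop:special_case_of_zero_cube}, where for $n=0$ the cube $\capZ$ is exactly the pushout 2-cube $\capX\to\ast_Y'\leftarrow\ast_Y'\to\Suspt_Y\capX$.

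To conclude, I would take homotopy pullback in the $\{a,b\}$-direction vertex-wise and extract, from the cartesian-ness estimates of $\capZ$ together with those of its $W$-subcubes joined with the pushout $\{a,b\}$-face, the $(\id+1)$-cartesian property of each $k$-subcube of the $(n+1)$-cube $\capX\rightarrow\Loopt_Y\Suspt_Y\capX$. The main obstacle will be the bookkeeping of cocartesian and cartesian estimates across all subcubes of $\capZ$: one must verify that the Blakers--Massey optimization (a minimum over partitions of the indexing set) always saturates exactly at the $(\id+1)$-cartesian bound, and in particular, that for mixed subcubes involving both $W$-directions and $\{a,b\}$-directions the $\infty$-cocartesian contribution from the pushout $\{a,b\}$-face never worsens the estimate.
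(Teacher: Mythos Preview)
Your plan is sound and takes a genuinely different route from the paper. The paper does not enlarge $\capX$ by two new directions; instead it forms the iterated homotopy cofiber $C$ of $\capX$ in $\AlgOY$, builds the $\infty$-cocartesian $(n+1)$-cube $\capX\to\capC$ (with $\capC$ the $n$-cube equal to $*_Y$ everywhere except at the terminal vertex, where it is $C$), and then studies the commuting square of $(n+1)$-cubes
\begin{align*}
\xymatrix{
\capX\ar[r]^-{(a)}\ar[d]_-{(*)} & \capC\ar[d]^-{(c)}\\
\Loopt_Y\Suspt_Y\capX\ar[r]^-{(b)} & \Loopt_Y\Suspt_Y\capC
}
\end{align*}
estimating the cartesian-ness of $(a)$, $(b)$, $(c)$ separately via higher Blakers--Massey and then deducing $(*)$ by \cite[3.9]{Ching_Harper}. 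The payoff of this decomposition is that $(c)$ collapses to the connectivity of a single map $C\to\Loopt_Y\Suspt_Y C$ (Proposition~\ref{prop:special_case_of_zero_cube}), which keeps the argument modular and makes it easy to transport to $\Loopt_Y^r\Suspt_Y^r$ for all $r$ (Theorem~\ref{thm:main_theorem_homotopical_estimates}). Your approach trades this modularity for one global Blakers--Massey application to $\capZ$; the partition optimization does land on $n+2$ (minimized by any partition separating $a$ from $b$ with no leftover $W$-blocks), and since taking fibers in the $a$-direction turns $\capZ$ into exactly the $(n+1)$-cube $\capX\to\Loopt_Y\Suspt_Y\capX$, the two cubes have the same total fiber and hence the same cartesian-ness. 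Two points to watch. First, $\Loopt_Y\Suspt_Y\capX$ is not itself a face of $\capZ$, so its $(\id+1)$-cartesian-ness has to be obtained indirectly---either by applying Blakers--Massey to the face $\Suspt_Y\capX$ and then $\Loopt_Y$ (as the paper does at the end of each inductive step), or by combining the estimates for $\capX$ and for $\capX\to\Loopt_Y\Suspt_Y\capX$ via \cite[3.8]{Ching_Harper}. Second, the higher-cube Blakers--Massey you need is \cite[1.7]{Ching_Harper}, not \cite[1.5]{Ching_Harper}.
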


\begin{rem}
The cartesian-ness estimates in Theorem \ref{thm:main_theorem_for_special_case_of_0_connected} are strong enough to show that if $X\in\AlgOY$ is 0-connected (rel. $Y$), then (i) $X$ is weakly equivalent to its Bousfield-Kan completion with respect to $\Loopt_Y\Suspt_Y$ (i.e., $X\wequiv X^\wedge_{\Loopt_Y\Suspt_Y}$) and (ii) the associated homotopy spectral sequence converges strongly.
\end{rem}

The duality relationship observed above---between a specific uniform cartesian-ness of 1-cubes in $\AlgOY$ and a specific uniform cocartesian-ness of 1-cubes in $\AlgOY$--- persists for all higher dimensional $n$-cubes $\capX$ in $\AlgOY$ $(n\geq 0)$.
 
\begin{prop}
\label{prop:duality_special_case_of_0_connected}
Let $W$ be a finite set and $\capX$ a $W$-cube in $\AlgOY$. Let $n=|W|$. The $n$-cube $\capX$ is $(\id+1)$-cartesian in $\AlgOY$ if and only if it is $(2\cdot\id)$-cocartesian in $\AlgOY$.
\end{prop}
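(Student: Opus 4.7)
The plan is to apply the higher Blakers-Massey theorem \cite[1.5]{Ching_Harper} (as used already in the proof of Proposition \ref{prop:special_case_of_zero_cube}) together with its dual counterpart in $\AlgO$ to each $S$-subcube of $\capX$, independently for each $S \subseteq W$. Both the hypothesis and the conclusion on each side are pointwise conditions on all $S$-subcubes, so it suffices to verify the equivalence one subcube at a time; cartesian- and cocartesian-ness in $\AlgOY$ may be read off from $\AlgO$ via the forgetful functor, where the Ching-Harper estimates apply.

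For the implication $(2\cdot\id)$-cocartesian $\Rightarrow$ $(\id+1)$-cartesian, fix $S \subseteq W$. When $S = \emptyset$, both conditions are tautologically the same: they each amount to $\capX_\emptyset$ being 0-connected (rel.\ $Y$) in the pointed retractive setting (compare the preamble to Theorem \ref{thm:main_theorem_for_special_case_of_0_connected}). When $S \neq \emptyset$, higher Blakers-Massey applied to the $S$-subcube yields an $l_S$-cartesian estimate with
\begin{align*}
  l_S \Equal \min_{\{T_\alpha\}}\Bigl(-|S| + \sum_\alpha (k_{T_\alpha} + 1)\Bigr),
\end{align*}
the minimum ranging over partitions $\{T_\alpha\}$ of $S$ into non-empty subsets, where $k_{T_\alpha} = 2|T_\alpha|$ is the hypothesized cocartesian-ness of the $T_\alpha$-subcube. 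Substituting gives $l_S = \min_p(|S| + p) = |S| + 1$, attained by the one-part partition $\{S\}$.

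For the reverse implication $(\id+1)$-cartesian $\Rightarrow$ $(2\cdot\id)$-cocartesian, I would apply the dual Blakers-Massey estimate to each $S$-subcube. When $S = \emptyset$, the same tautology applies. When $S \neq \emptyset$, substituting $c_{T_\alpha} = |T_\alpha| + 1$ (the hypothesized cartesian-ness) into the dual formula
\begin{align*}
  l_S \Equal \min_{\{T_\alpha\}}\Bigl(|S| + \sum_\alpha(c_{T_\alpha} - 1)\Bigr)
\end{align*}
yields $l_S = \min_{\{T_\alpha\}}(|S| + \sum_\alpha |T_\alpha|) = 2|S|$, independent of the partition.

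The hardest step is organizational rather than conceptual: isolating the precise forms of higher Blakers-Massey and its dual in \cite{Ching_Harper} as they apply in the retractive setting $\AlgOY$, and carefully verifying the combinatorial minima over partitions. In direction 1 the trivial partition $\{S\}$ is optimal, while in direction 2 every partition attains the same value $2|S|$; these two features together are precisely what makes the conditions $(\id+1)$-cartesian and $(2\cdot\id)$-cocartesian mutually equivalent on the nose.
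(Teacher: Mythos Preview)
Your approach is the same as the paper's: apply higher Blakers--Massey \cite[1.7]{Ching_Harper} and its dual \cite[1.11]{Ching_Harper} to each $S$-subcube, using the assumed uniform estimates on all smaller faces as input. The paper walks through $n=2,3$ explicitly and appeals to induction, while you write the general partition computation directly; these are the same argument.

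However, your stated form of the dual estimate is not \cite[1.11]{Ching_Harper}. Compare with the paper's own computations (e.g., for the $2$-cube: the terms are $k_{\{1,2\}}+2-1=3+1$ and $2+k_{\{1\}}+k_{\{2\}}=2+2+2$). The correct term for a partition $\{T_\alpha\}$ of $S$ into $p\geq 2$ blocks is $|S|+\sum_\alpha c_{T_\alpha}$, not $|S|+\sum_\alpha(c_{T_\alpha}-1)$; the one-block partition contributes $c_S+|S|-1$. With $c_{T_\alpha}=|T_\alpha|+1$ this gives $2|S|$ for the trivial partition and $2|S|+p>2|S|$ for $p\geq 2$, so the minimum is still $2|S|$ and your conclusion stands---but it is attained \emph{only} at the one-block partition, not ``independent of the partition.'' Fix the formula and the accompanying remark, and reference \cite[1.11]{Ching_Harper} rather than a generic ``dual formula.''
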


\begin{rem}
In particular, we could have restated the last statement of Theorem \ref{thm:main_theorem_for_special_case_of_0_connected} as follows: If the $n$-cube $\capX$ is $(2\cdot\id)$-cocartesian in $\AlgOY$, then so is the $(n+1)$-cube of the form $\capX\rightarrow\Loopt_Y\Suspt_Y\capX$.
\end{rem}

Let's first work out the proof of Proposition \ref{prop:duality_special_case_of_0_connected}, before proceeding with a proof of Theorem \ref{thm:main_theorem_for_special_case_of_0_connected}; this will provide a useful warmup in making certain cartesian-ness and cocartesian-ness estimates.

\begin{proof}[Proof of Proposition \ref{prop:duality_special_case_of_0_connected}]
Here is the basic idea. As noted above, this is tautologically true for the case of 0-cubes and 1-cubes. Consider the case of 2-cubes. Suppose $\capX$ is a $\{1,2\}$-cube in $\AlgOY$ of the form
\begin{align}
\label{eq:2_cube_X_in_AlgOY}
\xymatrix{
  \capX_\emptyset\ar[r]\ar[d] & \capX_{\{1\}}\ar[d]\\
  \capX_{\{2\}}\ar[r] & \capX_{\{1,2\}}
}
\end{align}
Assume that $\capX$ is $(\id+1)$-cartesian in $\AlgOY$; this means that: the 0-subcubes are 1-cartesian, the 1-subcubes are 2-cartesian, and the 2-subcubes are 3-cartesian. Let's verify that $\capX$ is $(2\cdot\id)$-cocartesian in $\AlgOY$. The first step is to estimate the cocartesian-ness of $\capX$ itself (the only 2-subcube of $\capX$). By higher dual Blakers-Massey \cite[1.11]{Ching_Harper} for $\AlgO$, we know that $\capX$ is $k$-cocartesian where $k$ is the minimum of
\begin{align*}
  k_{\{1,2\}}+2-1 &= 3 + 1\\
  2+k_{\{1\}}+k_{\{2\}} &= 2 + 2 + 2
\end{align*}
Hence $k=4$ and we have calculated that the 2-cube $\capX$ is 4-cocartesian. By the earlier cases for $n=0,1$, together with \cite[3.9]{Ching_Harper} (e.g., \cite[1.8]{Goodwillie_calculus_2}), we have verified that: the 0-subcubes are 0-cocartesian, the 1-subcubes are 2-cocartesian, and the 2-subcubes are 4-cocartesian. Hence $\capX$ is $(2\cdot\id)$-cocartesian in $\AlgOY$. Conversely, assume that $\capX$ is $(2\cdot\id)$-cocartesian in $\AlgOY$. Let's verify that $\capX$ is $(\id+1)$-cartesian in $\AlgOY$. The first step is to estimate the cartesian-ness of $\capX$ itself (the only 2-subcube of $\capX$). By higher Blakers-Massey \cite[1.7]{Ching_Harper} for $\AlgO$, we know that $\capX$ is $k$-cartesian where $k$ is the minimum of
\begin{align*}
  -2+k_{\{1,2\}}+1 &= 4-1\\
  -2+k_{\{1\}}+1+k_{\{2\}}+1 &= 2+2
\end{align*}
Hence $k=3$ and we have calculated that the 2-cube $\capX$ is 3-cartesian. By the earlier cases for $n=0,1$, together with \cite[3.9]{Ching_Harper} (e.g., \cite[1.8]{Goodwillie_calculus_2}), we have verified that: the 0-subcubes are 1-cartesian, the 1-subcubes are 2-cartesian, and the 2-subcubes are 3-cartesian. Hence $\capX$ is $(\id+1)$-cartesian in $\AlgOY$.

Consider the case of 3-cubes. Suppose $\capX$ is a $\{1,2,3\}$-cube in $\AlgOY$. Assume that $\capX$ is $(\id+1)$-cartesian in $\AlgOY$; this means that: the 0-subcubes are 1-cartesian, the 1-subcubes are 2-cartesian, the 2-subcubes are 3-cartesian, and the 3-subcubes are 4-cartesian. Let's verify that $\capX$ is $(2\cdot\id)$-cocartesian in $\AlgOY$. The first step is to estimate the cocartesian-ness of $\capX$ itself (the only 3-subcube of $\capX$). By higher dual Blakers-Massey \cite[1.11]{Ching_Harper} for $\AlgO$, we know that $\capX$ is $k$-cocartesian where $k$ is the minimum of
\begin{align*}
  k_{\{1,2,3\}}+3-1 &= 4+2\\
  3+k_{\{1,2\}} + k_{\{3\}} &= 3+3+2\\
  3+k_{\{1\}}+k_{\{2\}}+k_{\{3\}} &= 3+2+2+2
\end{align*}
Note that by uniformity of the cartesian-ness estimates, allowing other partitions of $\{1,2,3\}$ provides nothing new; hence we have left them out of the above. Hence $k=6$ and we have calculated that the 3-cube $\capX$ is 6-cocartesian. By the earlier cases for $n=0,1,2$, together with \cite[3.9]{Ching_Harper} (e.g., \cite[1.8]{Goodwillie_calculus_2}), we have verified that: the 0-subcubes are 0-cocartesian, the 1-subcubes are 2-cocartesian, the 2-subcubes are 4-cocartesian, and the 3-subcubes are 6-cocartesian. Hence $\capX$ is $(2\cdot\id)$-cocartesian in $\AlgOY$. Conversely, assume that $\capX$ is $(2\cdot\id)$-cocartesian in $\AlgOY$. Let's verify that $\capX$ is $(\id+1)$-cartesian in $\AlgOY$. The first step is to estimate the cartesian-ness of $\capX$ itself (the only 3-subcube of $\capX$). By higher Blakers-Massey \cite[1.7]{Ching_Harper} for $\AlgO$, we know that $\capX$ is $k$-cartesian where $k$ is the minimum of
\begin{align*}
  -3+k_{\{1,2,3\}}+1 &= -2 + 6\\
  -3+k_{\{1,2\}}+1+k_{\{3\}}+1 &= -1+4+2\\
  -3+k_{\{1\}}+1+k_{\{2\}}+1+k_{\{3\}}+1 &= 2+2+2
\end{align*}
Note that by uniformity of the cocartesian-ness estimates, allowing other partitions of $\{1,2,3\}$ provides nothing new; hence we have left them out of the above. Hence $k=4$ and we have calculated that the 3-cube $\capX$ is 4-cartesian. By the earlier cases for $n=0,1,2$, together with \cite[3.9]{Ching_Harper} (e.g., \cite[1.8]{Goodwillie_calculus_2}), we have verified that: the 0-subcubes are 1-cartesian, the 1-subcubes are 2-cartesian, the 2-subcubes are 3-cartesian, and the 3-subcubes are 4-cartesian. Hence $\capX$ is $(\id+1)$-cartesian in $\AlgOY$. And so forth.
\end{proof}

\begin{proof}[Proof of Theorem \ref{thm:main_theorem_for_special_case_of_0_connected}]
Here is the basic idea. The case of 0-cubes is given in Proposition \ref{prop:special_case_of_zero_cube} above. Consider the case of 1-cubes. Suppose $\capX$ is a $\{1\}$-cube in $\AlgOY$ of the form $\capX_\emptyset\rightarrow\capX_{\{1\}}$. 
Assume that $\capX$ is $(\id+1)$-cartesian in $\AlgOY$; this means that: the 0-subcubes are 1-cartesian and the 1-subcubes are 2-cartesian. Let's verify that the 2-cube of the form $\capX\rightarrow\Loopt_Y\Suspt_Y\capX$ is $(\id+1)$-cartesian in $\AlgOY$. It suffices to assume that $\capX$ is a cofibration 1-cube. Let $C$ be the homotopy cofiber of $\capX_\emptyset\rightarrow\capX_{\{1\}}$ in $\AlgOY$ and consider the associated $\infty$-cocartesian 2-cube of the form
\begin{align}
\label{eq:the_2_cube_X_to_C_special_case_0_connected}
\xymatrix{
  \capX:\ar[d] & \capX_\emptyset\ar[r]\ar[d] & \capX_{\{1\}}\ar[d]\\
  \capC: & {*_Y}\ar[r] & C
}
\end{align}
in $\AlgOY$, where $\capC$ is the indicated 1-face on the bottom. By Proposition \ref{prop:duality_special_case_of_0_connected}, we know that $\capX$ is $(2\cdot\id)$-cocartesian; in particular, $\capX$ is 2-cocartesian and hence $C$ is 2-connected (rel. $Y$). Putting it all together, it follows that the vertical maps in diagram \eqref{eq:the_2_cube_X_to_C_special_case_0_connected} are 1-connected and the horizontal maps are 2-connected. Here is our strategy: consider the commutative diagram of 2-cubes in $\AlgOY$ of the form
\begin{align}
\label{eq:the_strategy_picture_special_case_0_connected}
\xymatrix{
  \capX\ar[r]^-{(a)}\ar[d]_-{(*)} & \capC\ar[d]^-{(c)}\\
  \Loopt_Y\Suspt_Y\capX\ar[r]^-{(b)} & \Loopt_Y\Suspt_Y\capC
}
\end{align}
Instead of attempting to estimate the cartesian-ness of the 2-cube $(*)$ directly, which seems difficult, we will take an indirect attack and first estimate the cartesian-ness of the 2-cubes $(a),(b),(c)$; then we will use \cite[3.9]{Ching_Harper} (e.g., \cite[1.8]{Goodwillie_calculus_2}) to deduce an estimate for $(*)$. Consider the 2-cube $(a)$. By higher Blakers-Massey \cite[1.7]{Ching_Harper} for $\AlgO$, we know that $(a)$ is $k$-cartesian where $k$ is the minimum of
\begin{align*}
  -2+k_{\{1,2\}}+1 &= -1+\infty\\
  -2+k_{\{1\}}+1+k_{\{2\}}+1 &= 2+1
\end{align*}
Hence $k=3$ and we have calculated that the 2-cube $(a)$ is 3-cartesian. Consider the 2-cube $(b)$. The 2-cube $\Suspt_Y\capX\rightarrow\Suspt_Y\capC$ is $\infty$-cocartesian and has the form
\begin{align}
\label{eq:suspension_of_the_2_cube_X_to_C_special_case_0_connected}
\xymatrix{
  \Suspt_Y\capX:\ar[d] & \Suspt_Y\capX_\emptyset\ar[r]\ar[d] & \Suspt_Y\capX_{\{1\}}\ar[d]\\
  \Suspt_Y\capC: & \Suspt_Y{*_Y}\ar[r] & \Suspt_YC
}
\end{align}
in $\AlgOY$, where $\Suspt_Y{*_Y}\wequiv{*_Y}$. It follows that the vertical maps in diagram \eqref{eq:suspension_of_the_2_cube_X_to_C_special_case_0_connected} are 2-connected and the horizontal maps are 3-connected. By higher Blakers-Massey \cite[1.7]{Ching_Harper} for $\AlgO$, we know that $\Suspt_Y\capX\rightarrow\Suspt_Y\capC$ is $k$-cartesian where $k$ is the minimum of
\begin{align*}
  -2+k_{\{1,2\}}+1 &= -1+\infty\\
  -2+k_{\{1\}}+1+k_{\{2\}}+1 &= 3+2
\end{align*}
Hence $k=5$ and we have calculated that the 2-cube $\Suspt_Y\capX\rightarrow\Suspt_Y\capC$ is 5-cartesian; therefore the 2-cube $(b)$ is 4-cartesian. Consider the 2-cube (c). We know that $C$ is 2-connected (rel. $Y$) from above, hence by Proposition \ref{prop:special_case_of_zero_cube} the map $C\rightarrow\Loopt_Y\Suspt_Y C$ is $(2\cdot 2+2)$-connected. This calculation will produce a cartesian-ness estimate for the 2-cube $(c)$. Here is why: the 2-cube (c) has the form
\begin{align}
\label{eq:the_2_cube_labelled_c}
\xymatrix{
  {*_Y}\ar[r]\ar[d] & C\ar[d]^-{(\#)}\\
  \Loopt_Y\Suspt_Y{*_Y}\ar[r] & \Loopt_Y\Suspt_Y C
}
\end{align}
in $\AlgOY$, where $\Loopt_Y\Suspt_Y{*_Y}\wequiv {*_Y}$. Taking homotopy fibers horizontally produces the map $\Loopt_Y C\rightarrow\Loopt_Y\Loopt_Y\Suspt_Y C$; this map is $\Loopt_Y$ of the right-hand vertical map. We know $(\#)$ is 6-connected from above; hence the 2-cube $(c)$ is 5-cartesian. Putting it all together, it follows from diagram \eqref{eq:the_strategy_picture_special_case_0_connected} and \cite[3.9]{Ching_Harper} (e.g., \cite[1.8]{Goodwillie_calculus_2}), together with our cartesian-ness estimates for $(a),(b),(c)$,  that the 2-cube $(*)$ of the form
\begin{align}
\label{eq:the_2_cube_labelled_star}
\xymatrix{
  \capX:\ar[d]_-{(*)} & \capX_\emptyset\ar[r]\ar[d] & \capX_{\{1\}}\ar[d]\\
  \Loopt_Y\Suspt_Y\capX: & \Loopt_Y\Suspt_Y\capX_\emptyset\ar[r] & 
  \Loopt_Y\Suspt_Y\capX_{\{1\}}
}
\end{align}
in $\AlgOY$, is 3-cartesian. Finally, note that since the top horizontal map is 2-connected, it follows that the bottom horizontal map is 2-connected; i.e., the bottom horizontal 1-face is 2-cartesian. Hence we have verified that the 2-cube $\capX\rightarrow\Loopt_Y\Suspt_Y\capX$ satisfies: the 0-subcubes are 1-cartesian, the 1-subcubes are 2-cartesian, and the 2-subcubes are 3-cartesian. Therefore, the 2-cube of the form $\capX\rightarrow\Loopt_Y\Suspt_Y\capX$ is $(\id+1)$-cartesian in $\AlgOY$.

Consider the case of 2-cubes. Suppose $\capX$ is a $\{1,2\}$-cube in $\AlgOY$ of the form \eqref{eq:2_cube_X_in_AlgOY}. Assume that $\capX$ is $(\id+1)$-cartesian in $\AlgOY$; this means that: the 0-subcubes are 1-cartesian, the 1-subcubes are 2-cartesian, and the 2-subcubes are 3-cartesian. Let's verify that the 3-cube of the form $\capX\rightarrow\Loopt_Y\Suspt_Y\capX$ is $(\id+1)$-cartesian in $\AlgOY$. It suffices to assume that $\capX$ is a cofibration 2-cube. Let $C$ be the iterated homotopy cofiber of $\capX$ in $\AlgOY$ and consider the associated $\infty$-cocartesian 3-cube of the form
\begin{align}
\label{eq:the_3_cube_X_to_C_special_case_0_connected}
\xymatrix@!0{
\capX:\ar[dd]&&\capX_\emptyset\ar[dd]\ar[rr]\ar[dr] &&
\capX_{\{1\}}\ar'[d][dd]\ar[dr]\\
&&&\capX_{\{2\}}\ar[dd]\ar[rr] &&
\capX_{\{1,2\}}\ar[dd]\\
\capC:&&{*_Y}\ar'[r][rr]\ar[dr] &&
{*_Y}\ar[dr]\\
&&&{*_Y}\ar[rr] &&
C
}
\end{align}
in $\AlgOY$, where $\capC$ is the indicated 2-face on the bottom. By Proposition \ref{prop:duality_special_case_of_0_connected}, we know that $\capX$ is $(2\cdot\id)$-cocartesian; in particular, $\capX$ is 4-cocartesian and hence $C$ is 4-connected (rel. $Y$). Putting it all together, it follows that the vertical maps in diagram \eqref{eq:the_3_cube_X_to_C_special_case_0_connected} are 1-connected, the horizontal maps in the top 2-face (i.e., the 1-faces of $\capX$) are 2-connected, and the top 2-face is 4-cocartesian. We would like to estimate the cocartesian-ness of the back 2-face of the form
\begin{align}
\label{eq:back_two_face_of_the_form}
\xymatrix{
  \capX_\emptyset\ar[r]\ar[d] & \capX_{\{1\}}\ar[d]\\
  {*_Y}\ar[r] & {*_Y}
}
\end{align}
The upper horizontal map is 2-connected and the lower horizontal map is $\infty$-connected (i.e., a weak equivalence), hence by \cite[3.8]{Ching_Harper} the back 2-face in \eqref{eq:the_3_cube_X_to_C_special_case_0_connected} is 3-cocartesian in $\AlgOY$. Similarly, the left-hand 2-face in \eqref{eq:the_3_cube_X_to_C_special_case_0_connected} is 3-cocartesian in $\AlgOY$. Here is our strategy: consider the commutative diagram of 3-cubes in $\AlgOY$ of the form \eqref{eq:the_strategy_picture_special_case_0_connected}. Instead of attempting to estimate the cartesian-ness of the 3-cube $(*)$ directly, which seems difficult, we will take an indirect attack and first estimate the cartesian-ness of the 3-cubes $(a),(b),(c)$; then we will use \cite[3.9]{Ching_Harper} (e.g., \cite[1.8]{Goodwillie_calculus_2}) to deduce an estimate for $(*)$. Consider the 3-cube $(a)$. By higher Blakers-Massey \cite[1.7]{Ching_Harper} for $\AlgO$, we know that $(a)$ is $k$-cartesian where $k$ is the minimum of
\begin{align*}
  -3+k_{\{1,2,3\}}+1 &= -2+\infty\\
  -3+k_{\{1,2\}}+1+k_{\{3\}}+1 &= -1+4+1\\
  -3+k_{\{1,3\}}+1+k_{\{2\}}+1 &= -1+3+2\\
  -3+k_{\{2,3\}}+1+k_{\{1\}}+1 &= -1+3+2\\
  -3+k_{\{1\}}+1+k_{\{2\}}+1+k_{\{3\}}+1 &= 2+2+1
\end{align*}
Hence $k=4$ and we have calculated that the 3-cube $(a)$ is 4-cartesian. Consider the 3-cube $(b)$. The 3-cube $\Suspt_Y\capX\rightarrow\Suspt_Y\capC$ is $\infty$-cocartesian in $\AlgOY$, where $\Suspt_Y{*_Y}\wequiv{*_Y}$. By higher Blakers-Massey \cite[1.7]{Ching_Harper} for $\AlgO$, we know that $\Suspt_Y\capX\rightarrow\Suspt_Y\capC$ is $k$-cartesian where $k$ is the minimum of
\begin{align*}
  -3+k_{\{1,2,3\}}+1 &= -2+\infty\\
  -3+k_{\{1,2\}}+1+k_{\{3\}}+1 &= -1+5+2\\
  -3+k_{\{1,3\}}+1+k_{\{2\}}+1 &= -1+4+3\\
  -3+k_{\{2,3\}}+1+k_{\{1\}}+1 &= -1+4+3\\
  -3+k_{\{1\}}+1+k_{\{2\}}+1+k_{\{3\}}+1 &= 3+3+2
\end{align*}
Hence $k=6$ and we have calculated that the 3-cube $\Suspt_Y\capX\rightarrow\Suspt_Y\capC$ is 6-cartesian; therefore the 3-cube $(b)$ is 5-cartesian. Consider the 3-cube (c). We know that $C$ is 4-connected (rel. $Y$) from above, hence by Proposition \ref{prop:special_case_of_zero_cube} the map $C\rightarrow\Loopt_Y\Suspt_Y C$ is $(2\cdot 4+2)$-connected. This calculation will produce a cartesian-ness estimate for the 3-cube $(c)$. Here is why: the 3-cube (c) has the form
\begin{align}
\label{eq:3_cube_c_has_the_form}
\xymatrix@!0{
{*_Y}\ar[dd]\ar[rr]\ar[dr] &&
{*_Y}\ar'[d][dd]\ar[dr]\\
&{*_Y}\ar[dd]\ar[rr] &&
C\ar[dd]^-{(\#)}\\
{*_Y'}\ar'[r][rr]\ar[dr] &&
{*_Y'}\ar[dr]\\
&{*_Y'}\ar[rr] &&
\Loopt_Y\Suspt_Y C
}
\end{align}
in $\AlgOY$, where $\Loopt_Y\Suspt_Y{*_Y}\wequiv {*_Y'}$. Taking homotopy fibers (twice) in $\AlgOY$ produces the map $\Loopt^2_Y C\rightarrow\Loopt^2_Y\Loopt_Y\Suspt_Y C$; this map is $\Loopt^2_Y$ of the right-hand vertical map $(\#)$. We know $(\#)$ is 10-connected from above; hence the 3-cube $(c)$ is 8-cartesian. Putting it all together, it follows from diagram \eqref{eq:the_strategy_picture_special_case_0_connected} and \cite[3.9]{Ching_Harper} (e.g., \cite[1.8]{Goodwillie_calculus_2}), together with our cartesian-ness estimates for $(a),(b),(c)$,  that the 3-cube $(*)$ of the form
\begin{align*}
\xymatrix{
  \capX:\ar[d]_-{(*)} & \capX_\emptyset\ar[r]\ar[d] & \capX_{\{1\}}\ar[d]\\
  \Loopt_Y\Suspt_Y\capX: & \Loopt_Y\Suspt_Y\capX_\emptyset\ar[r] & 
  \Loopt_Y\Suspt_Y\capX_{\{1\}}
}
\end{align*}
in $\AlgOY$, is 4-cartesian. Let's calculate a cartesian-ness estimate for the 2-subcube $\Loopt_Y\Suspt_Y\capX$ of $(*)$. We know that $\capX$ is $(2\cdot\id)$-cocartesian in $\AlgOY$ from above, hence $\Suspt_Y\capX$ is $(2\cdot\id+1)$-cocartesian in $\AlgOY$. By higher Blakers-Massey \cite[1.7]{Ching_Harper} for $\AlgO$, we know that $\Suspt_Y\capX$ is $k$-cartesian where $k$ is the minimum of
\begin{align*}
  -2+k_{\{1,2\}}+1 &= -1+5\\
  -2+k_{\{1\}}+1+k_{\{2\}}+1 &= 3+3
\end{align*}
 Hence $k=4$ and we have calculated that the 2-cube $\Suspt_Y\capX$ is 4-cartesian; therefore the 2-cube $\Loopt_Y\Suspt_Y\capX$ is 3-cartesian. Hence we have verified that the 3-cube $\capX\rightarrow\Loopt_Y\Suspt_Y\capX$ satisfies: the 0-subcubes are 1-cartesian, the 1-subcubes are 2-cartesian, the 2-subcubes are 3-cartesian, and the 3-subcubes are 4-cartesian. Therefore, the 3-cube of the form $\capX\rightarrow\Loopt_Y\Suspt_Y\capX$ is $(\id+1)$-cartesian in $\AlgOY$. And so forth.
\end{proof}

\subsection{The general case $k\geq 0$}

It will be useful to have similar cartesian-ness and cocartesian-ness estimates for $k\geq 0$, beyond the $k=0$ case when $X\in\AlgOY$ is 0-connected (rel. $Y$). For this purpose, let's revisit our above motivating remarks in the case when $k\geq 0$.

Let $k\geq 0$. Let $\capX$ be a 0-cube in $\AlgOY$ of the form $\capX_\emptyset$ and consider the maps
\begin{align*}
  *_Y\xrightarrow{(*)} \capX_\emptyset\xrightarrow{(\#)} *_Y
\end{align*} 
in $\AlgO$. If $\capX_\emptyset$ is $k$-connected (rel. $Y$), then $(*)$ is $k$-connected and $(\#)$ is $(k+1)$-connected; hence the 0-cube $\capX$ is $k$-cocartesian and $(k+1)$-cartesian in $\AlgOY$. These three conditions are tautologically the same: $\capX_\emptyset$ is $k$-connected (rel. $Y$) if and only if the 0-cube $\capX$ is $(k+1)$-cartesian in $\AlgOY$ if and only if the 0-cube $\capX$ is $k$-cocartesian in $\AlgOY$. By Proposition \ref{prop:special_case_of_zero_cube}, the 1-cube of the form $\capX\rightarrow \Loopt_Y\Suspt_Y\capX$ in $\AlgOY$, which has the form
\begin{align}
\label{eq:1_cube_for_X_k_connected}
  \capX_\emptyset\rightarrow \Loopt_Y\Suspt_Y\capX_\emptyset
\end{align}
is $(2k+2)$-connected; furthermore, the 0-cubes are $k$-connected (rel. $Y$). In other words, the 1-cube \eqref{eq:1_cube_for_X_k_connected} satisfies (i) the 0-subcubes are $(k+1)$-cartesian and the 1-subcubes are $(2k+2)$-cartesian in $\AlgOY$ (i.e., the 1-cube \eqref{eq:1_cube_for_X_k_connected} is $(\id+1)(k+1)$-cartesian in $\AlgOY$) and (ii) the 0-subcubes are $k$-cocartesian and the 1-subcubes are $(2k+2)$-cocartesian (i.e., the 1-cube \eqref{eq:1_cube_for_X_k_connected} is $(\id(k+2)+k)$-cocartesian in $\AlgOY$). These conditions are tautologically the same: a 1-cube is $(\id+1)(k+1)$-cartesian in $\AlgOY$ if and only if it is $(\id(k+2)+k)$-cocartesian in $\AlgOY$. 

With these observations, we can restate Proposition \ref{prop:special_case_of_zero_cube} as follows: If the 0-cube $\capX$ is $(\id+1)(k+1)$-cartesian in $\AlgOY$, then so is the 1-cube of the form $\capX\rightarrow\Loopt_Y\Suspt_Y\capX$.

\begin{rem}
Equivalently, we could have restated Proposition \ref{prop:special_case_of_zero_cube} as follows: If the 0-cube $\capX$ is $(\id(k+2)+k)$-cocartesian in $\AlgOY$, then so is the 1-cube of the form $\capX\rightarrow\Loopt_Y\Suspt_Y\capX$.
\end{rem}

It turns out that this relationship persists for all higher dimensional $n$-cubes $\capX$ in $\AlgOY$ $(n\geq 0)$, beyond the $n=0$ case of Proposition \ref{prop:special_case_of_zero_cube} above.

\begin{thm}
\label{thm:main_theorem_for_special_case_of_k_connected}
Let $k\geq 0$. Let $W$ be a finite set and $\capX$ a $W$-cube in $\AlgOY$. Let $n=|W|$. If the $n$-cube $\capX$ is $(\id+1)(k+1)$-cartesian in $\AlgOY$, then so is the $(n+1)$-cube of the form $\capX\rightarrow\Loopt_Y\Suspt_Y\capX$.
\end{thm}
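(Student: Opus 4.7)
The plan is to adapt the induction strategy of Theorem \ref{thm:main_theorem_for_special_case_of_0_connected}, replacing the numerical estimates there by their $k$-shifted counterparts. Two preparatory ingredients are needed. First, I would establish the duality analogue of Proposition \ref{prop:duality_special_case_of_0_connected}: an $n$-cube $\capX$ in $\AlgOY$ is $(\id+1)(k+1)$-cartesian if and only if it is $(\id(k+2)+k)$-cocartesian. The base cases $n=0,1$ are tautological, since $\capX_\emptyset$ is $k$-connected (rel.\ $Y$) exactly when the 0-cube is $(k+1)$-cartesian and exactly when it is $k$-cocartesian, while the two linear functions $m\mapsto(m+1)(k+1)$ and $m\mapsto m(k+2)+k$ agree at $m=1$. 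The inductive step proceeds exactly as in the proof of Proposition \ref{prop:duality_special_case_of_0_connected}, via higher Blakers-Massey \cite[1.7]{Ching_Harper} in one direction and higher dual Blakers-Massey \cite[1.11]{Ching_Harper} in the other, applied to the top-dimensional $n$-subcube, with the inductive hypothesis controlling proper subcubes. The linear offsets have been chosen precisely so that the minimum in each partition-sum estimate is attained at the trivial single-block partition, matching the required top-dimensional value.

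Next, I would prove the theorem by induction on $n=|W|$. The base case $n=0$ is Proposition \ref{prop:special_case_of_zero_cube}. For the inductive step, assume $\capX$ is a cofibration $n$-cube that is $(\id+1)(k+1)$-cartesian in $\AlgOY$. By the duality step, $\capX$ is $(\id(k+2)+k)$-cocartesian, so in particular the top-dimensional $n$-cube $\capX$ is $(n(k+2)+k)$-cocartesian and its iterated homotopy cofiber $C$ is $(n(k+2)+k)$-connected (rel.\ $Y$). Form the $\infty$-cocartesian $(n+1)$-cube $\capX\rightarrow\capC$ as in \eqref{eq:the_2_cube_X_to_C_special_case_0_connected} and \eqref{eq:the_3_cube_X_to_C_special_case_0_connected}, and build the strategy diagram of $(n+1)$-cubes as in \eqref{eq:the_strategy_picture_special_case_0_connected} with arrows labeled $(a),(b),(c),(*)$.

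The remainder mirrors the $k=0$ argument with shifted constants. The cartesian-ness of $(a)$ follows from higher Blakers-Massey \cite[1.7]{Ching_Harper} applied to $\capX\rightarrow\capC$, using the face connectivities coming from the hypothesis and the connectivity of $C$. For $(b)$, $\Suspt_Y$ boosts cocartesian-ness by one so that $\Suspt_Y\capX\rightarrow\Suspt_Y\capC$ remains $\infty$-cocartesian with appropriately enlarged face connectivities; Blakers-Massey, followed by the outer $\Loopt_Y$ (which reduces cartesian-ness by one), yields the desired estimate. For $(c)$, every face collapses to $*_Y$ except the terminal vertex $C\rightarrow\Loopt_Y\Suspt_Y C$, which is $(2(n(k+2)+k)+2)$-connected by Proposition \ref{prop:special_case_of_zero_cube}; since the $(n+1)$-cube $(c)$ is the $n$-fold iterated homotopy fiber in $\AlgOY$ of that map, it is correspondingly highly cartesian. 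Combining the three estimates via the 3-cube-of-cubes argument and \cite[3.9]{Ching_Harper} produces the desired top-dimensional estimate for $(*)$. The lower-dimensional subcubes of $\capX\rightarrow\Loopt_Y\Suspt_Y\capX$ are then controlled by restriction: the top face $\capX$ by hypothesis, the bottom face $\Loopt_Y\Suspt_Y\capX$ by a Blakers-Massey estimate applied to $\Suspt_Y\capX$ (whose cocartesian-ness is known by the duality step) followed by $\Loopt_Y$, and the remaining connecting-map subcubes by the inductive hypothesis.

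The main obstacle is the careful bookkeeping of Blakers-Massey partition sums: one must verify, for every admissible partition and every subcube dimension, that the minimum in the estimate is realized at precisely the required value $(m+1)(k+1)$ for the $m$-dimensional subcubes of $\capX\rightarrow\Loopt_Y\Suspt_Y\capX$. The linear offsets are chosen so that this holds uniformly in $n$ and $k$, and once the $n=1,2$ computations are checked in direct analogy with the displayed minimum calculations in the proof of Theorem \ref{thm:main_theorem_for_special_case_of_0_connected} (each connectivity and cartesian-ness value shifted by the appropriate multiple of $k$), the general inductive step follows by the same scheme.
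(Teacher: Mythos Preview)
Your proposal is correct and follows essentially the same approach as the paper: the paper likewise first establishes the duality (this is Proposition \ref{prop:duality_special_case_of_k_connected}), then proceeds through the strategy diagram \eqref{eq:the_strategy_picture_special_case_0_connected} with the three auxiliary cubes $(a),(b),(c)$ estimated via higher Blakers-Massey, suspension/loop shifts, and Proposition \ref{prop:special_case_of_zero_cube} respectively, combining them via \cite[3.9]{Ching_Harper} and handling the bottom face $\Loopt_Y\Suspt_Y\capX$ separately via Blakers-Massey on $\Suspt_Y\capX$. The only cosmetic difference is that the paper writes out the cases $n=0,1,2$ explicitly with the shifted numerics displayed and concludes ``and so forth,'' whereas you phrase it as a formal induction with the connecting-map subcubes handled by the inductive hypothesis---these are the same argument.
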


The duality relationship observed above---between a specific uniform cartesian-ness of 1-cubes in $\AlgOY$ and a specific uniform cocartesian-ness of 1-cubes in $\AlgOY$--- persists for all higher dimensional $n$-cubes $\capX$ in $\AlgOY$ $(n\geq 0)$.
 
\begin{prop}
\label{prop:duality_special_case_of_k_connected}
Let $W$ be a finite set and $\capX$ a $W$-cube in $\AlgOY$. Let $n=|W|$. The $n$-cube $\capX$ is $(\id+1)(k+1)$-cartesian in $\AlgOY$ if and only if it is $(\id(k+2)+k)$-cocartesian in $\AlgOY$.
\end{prop}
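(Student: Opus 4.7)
The plan is to proceed by induction on $n = |W|$, following the scaffolding of the proof of Proposition \ref{prop:duality_special_case_of_0_connected} verbatim, with the scaled functions $(\id+1)(k+1)$ and $(\id(k+2)+k)$ in place of $\id+1$ and $2\cdot\id$. The base cases $n=0, 1$ are handled by unwinding definitions: for $n=0$, the three conditions---$\capX_\emptyset$ being $k$-connected (rel.\ $Y$), the 0-cube being $(k+1)$-cartesian, and the 0-cube being $k$-cocartesian---are tautologically equivalent, as laid out in the motivating discussion preceding the statement. The $n=1$ case reduces to the $n=0$ case together with the observation that the only nontrivial constraint on both sides of the equivalence is that the underlying 1-subcube be $(2k+2)$-connected.

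For the inductive step, assume the proposition holds for cubes of dimension less than $n$. Given $\capX$ that is $(\id+1)(k+1)$-cartesian, the inductive hypothesis immediately promotes every proper subcube to $(\id(k+2)+k)$-cocartesian, leaving only the top-dimensional constraint: that $\capX$ itself be $(n(k+2)+k)$-cocartesian. I would apply higher dual Blakers-Massey \cite[1.11]{Ching_Harper} for $\AlgO$, substituting $k_{T_i} = (t_i+1)(k+1)$ (with $t_i=|T_i|$) into the resulting minimum over partitions $\{T_1,\ldots,T_r\}$ of $W$. The converse direction, assuming $\capX$ is $(\id(k+2)+k)$-cocartesian and concluding it is $(n+1)(k+1)$-cartesian, is entirely symmetric: the inductive hypothesis handles the proper subcubes, and higher Blakers-Massey \cite[1.7]{Ching_Harper} for $\AlgO$ with $k_{T_i} = t_i(k+2)+k$ supplies the top-dimensional estimate.

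The main obstacle is purely arithmetic: verifying that in both directions the minimum over partitions is attained at (and only at) the trivial partition $r=1$, producing the sharp target values $n(k+2)+k$ and $(n+1)(k+1)$ respectively. For $r=1$ this is a direct identity (equivalent to the balance $(n+1)(k+1) + (n-1) = n(k+2) + k$), and for $r\geq 2$ one need only check a slack inequality of the form $(r-1)(k+1) \geq 0$, which holds uniformly in $k\geq 0$. The functions $(\id+1)(k+1)$ and $(\id(k+2)+k)$ are engineered precisely so that this balance persists for all $k\geq 0$, and once the $k=0$ argument of Proposition \ref{prop:duality_special_case_of_0_connected} is in hand, the general case is a mechanical extension rather than a conceptually new argument.
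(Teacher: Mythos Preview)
Your proposal is correct and follows essentially the same approach as the paper: both argue by induction on $n$, dispatch $n=0,1$ as tautological, and for the inductive step invoke higher (dual) Blakers--Massey \cite[1.7, 1.11]{Ching_Harper} to obtain the top-dimensional estimate while the inductive hypothesis handles all proper subcubes. The paper presents the argument by spelling out the $n=2$ and $n=3$ cases with explicit numerics and closing with ``And so forth,'' whereas you phrase the same induction abstractly and isolate the key arithmetic identity $(n+1)(k+1)+(n-1)=n(k+2)+k$; your slack inequality $(r-1)(k+1)\geq 0$ is exactly the converse-direction slack (from \cite[1.7]{Ching_Harper}), and the forward-direction slack from \cite[1.11]{Ching_Harper} is similarly nonnegative for $r\geq 2$.
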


\begin{rem}
In particular, we could have restated the last statement of Theorem \ref{thm:main_theorem_for_special_case_of_k_connected} as follows: If the $n$-cube $\capX$ is $(\id(k+2)+k)$-cocartesian in $\AlgOY$, then so is the $(n+1)$-cube of the form $\capX\rightarrow\Loopt_Y\Suspt_Y\capX$.
\end{rem}

Let's first work out the proof of Proposition \ref{prop:duality_special_case_of_k_connected}, before proceeding with a proof of Theorem \ref{thm:main_theorem_for_special_case_of_k_connected}; this will provide a useful warmup in making certain cartesian-ness and cocartesian-ness estimates.

\begin{proof}[Proof of Proposition \ref{prop:duality_special_case_of_k_connected}]
These estimates were observed in \cite{Blomquist} for the special case of $Y=*$, and they remain true in the context of $\capO$-algebras centered at $Y$. Here is the basic idea. As noted above, this is tautologically true for the case of 0-cubes and 1-cubes. Consider the case of 2-cubes. Suppose $\capX$ is a $\{1,2\}$-cube in $\AlgOY$ of the form \eqref{eq:2_cube_X_in_AlgOY}. Assume that $\capX$ is $(\id+1)(k+1)$-cartesian in $\AlgOY$; this means that: the 0-subcubes are $(k+1)$-cartesian, the 1-subcubes are $(2k+2)$-cartesian, and the 2-subcubes are $(3k+3)$-cartesian. Let's verify that $\capX$ is $(\id(k+2)+k)$-cocartesian in $\AlgOY$. The first step is to estimate the cocartesian-ness of $\capX$ itself (the only 2-subcube of $\capX$). By higher dual Blakers-Massey \cite[1.11]{Ching_Harper} for $\AlgO$, we know that $\capX$ is $l$-cocartesian where $l$ is the minimum of
\begin{align*}
  l_{\{1,2\}}+2-1 &= 1+(3k+3)\\
  2+l_{\{1\}}+l_{\{2\}} &= 2 + (2k+2)+(2k+2)
\end{align*}
Hence $l=3k+4$ and we have calculated that the 2-cube $\capX$ is $(3k+4)$-cocartesian. By the earlier cases for $n=0,1$, together with \cite[3.9]{Ching_Harper} (e.g., \cite[1.8]{Goodwillie_calculus_2}), we have verified that: the 0-subcubes are $k$-cocartesian, the 1-subcubes are $(2k+2)$-cocartesian, and the 2-subcubes are $(3k+4)$-cocartesian. Hence $\capX$ is $(\id(k+2)+k)$-cocartesian in $\AlgOY$. Conversely, assume that $\capX$ is $(\id(k+2)+k)$-cocartesian in $\AlgOY$. Let's verify that $\capX$ is $(\id+1)(k+1)$-cartesian in $\AlgOY$. The first step is to estimate the cartesian-ness of $\capX$ itself (the only 2-subcube of $\capX$). By higher Blakers-Massey \cite[1.7]{Ching_Harper} for $\AlgO$, we know that $\capX$ is $l$-cartesian where $l$ is the minimum of
\begin{align*}
  -2+l_{\{1,2\}}+1 &= (3k+4)-1\\
  -2+l_{\{1\}}+1+l_{\{2\}}+1 &= (2k+2)+(2k+2)
\end{align*}
Hence $l=3k+3$ and we have calculated that the 2-cube $\capX$ is $(3k+3)$-cartesian. By the earlier cases for $n=0,1$, together with \cite[3.9]{Ching_Harper} (e.g., \cite[1.8]{Goodwillie_calculus_2}), we have verified that: the 0-subcubes are $(k+1)$-cartesian, the 1-subcubes are $(2k+2)$-cartesian, and the 2-subcubes are $(3k+3)$-cartesian. Hence $\capX$ is $(\id+1)(k+1)$-cartesian in $\AlgOY$.

Consider the case of 3-cubes. Suppose $\capX$ is a $\{1,2,3\}$-cube in $\AlgOY$. Assume that $\capX$ is $(\id+1)(k+1)$-cartesian in $\AlgOY$; this means that: the 0-subcubes are $(k+1)$-cartesian, the 1-subcubes are $(2k+2)$-cartesian, the 2-subcubes are $(3k+3)$-cartesian, and the 3-subcubes are $(4k+4)$-cartesian. Let's verify that $\capX$ is $(\id(k+2)+k)$-cocartesian in $\AlgOY$. The first step is to estimate the cocartesian-ness of $\capX$ itself (the only 3-subcube of $\capX$). By higher dual Blakers-Massey \cite[1.11]{Ching_Harper} for $\AlgO$, we know that $\capX$ is $l$-cocartesian where $l$ is the minimum of
\begin{align*}
  l_{\{1,2,3\}}+3-1 &= (4k+4)+2\\
  3+l_{\{1,2\}} + l_{\{3\}} &= 3+(3k+3)+(2k+2)\\
  3+l_{\{1\}}+l_{\{2\}}+l_{\{3\}} &= 3+(2k+2)+(2k+2)+(2k+2)
\end{align*}
Note that by uniformity of the cartesian-ness estimates, allowing other partitions of $\{1,2,3\}$ provides nothing new; hence we have left them out of the above. Hence $l=4k+6$ and we have calculated that the 3-cube $\capX$ is $(4k+6)$-cocartesian. By the earlier cases for $n=0,1,2$, together with \cite[3.9]{Ching_Harper} (e.g., \cite[1.8]{Goodwillie_calculus_2}), we have verified that: the 0-subcubes are $k$-cocartesian, the 1-subcubes are $(2k+2)$-cocartesian, the 2-subcubes are $(3k+4)$-cocartesian, and the 3-subcubes are $(4k+6)$-cocartesian. Hence $\capX$ is $(\id(k+2)+k)$-cocartesian in $\AlgOY$. Conversely, assume that $\capX$ is $(\id(k+2)+k)$-cocartesian in $\AlgOY$. Let's verify that $\capX$ is $(\id+1)(k+1)$-cartesian in $\AlgOY$. The first step is to estimate the cartesian-ness of $\capX$ itself (the only 3-subcube of $\capX$). By higher Blakers-Massey \cite[1.7]{Ching_Harper} for $\AlgO$, we know that $\capX$ is $l$-cartesian where $l$ is the minimum of
\begin{align*}
  -3+l_{\{1,2,3\}}+1 &= -2 + (4k+6)\\
  -3+l_{\{1,2\}}+1+l_{\{3\}}+1 &= -1+(3k+4)+(2k+2)\\
  -3+l_{\{1\}}+1+l_{\{2\}}+1+l_{\{3\}}+1 &= (2k+2)+(2k+2)+(2k+2)
\end{align*}
Note that by uniformity of the cocartesian-ness estimates, allowing other partitions of $\{1,2,3\}$ provides nothing new; hence we have left them out of the above. Hence $l=4k+4$ and we have calculated that the 3-cube $\capX$ is $(4k+4)$-cartesian. By the earlier cases for $n=0,1,2$, together with \cite[3.9]{Ching_Harper} (e.g., \cite[1.8]{Goodwillie_calculus_2}), we have verified that: the 0-subcubes are $(k+1)$-cartesian, the 1-subcubes are $(2k+2)$-cartesian, the 2-subcubes are $(3k+3)$-cartesian, and the 3-subcubes are $(4k+k)$-cartesian. Hence $\capX$ is $(\id+1)(k+1)$-cartesian in $\AlgOY$. And so forth.
\end{proof}

\begin{proof}[Proof of Theorem \ref{thm:main_theorem_for_special_case_of_k_connected}]
Here is the basic idea. The case of 0-cubes is given in Proposition \ref{prop:special_case_of_zero_cube} above. Consider the case of 1-cubes. Suppose $\capX$ is a $\{1\}$-cube in $\AlgOY$ of the form $\capX_\emptyset\rightarrow\capX_{\{1\}}$. 
Assume that $\capX$ is $(\id+1)(k+1)$-cartesian in $\AlgOY$; this means that: the 0-subcubes are $(k+1)$-cartesian and the 1-subcubes are $(2k+2)$-cartesian. Let's verify that the 2-cube of the form $\capX\rightarrow\Loopt_Y\Suspt_Y\capX$ is $(\id+1)(k+1)$-cartesian in $\AlgOY$. It suffices to assume that $\capX$ is a cofibration 1-cube. Let $C$ be the homotopy cofiber of $\capX_\emptyset\rightarrow\capX_{\{1\}}$ in $\AlgOY$ and consider the associated $\infty$-cocartesian 2-cube of the form \eqref{eq:the_2_cube_X_to_C_special_case_0_connected} in $\AlgOY$, where $\capC$ is the indicated 1-face on the bottom. By Proposition \ref{prop:duality_special_case_of_k_connected}, we know that $\capX$ is $(\id(k+2)+k)$-cocartesian; in particular, $\capX$ is $(2k+2)$-cocartesian and hence $C$ is $(2k+2)$-connected (rel. $Y$). Putting it all together, it follows that the vertical maps in diagram \eqref{eq:the_2_cube_X_to_C_special_case_0_connected} are $(k+1)$-connected and the horizontal maps are $(2k+2)$-connected. Here is our strategy: consider the commutative diagram of 2-cubes in $\AlgOY$ of the form \eqref{eq:the_strategy_picture_special_case_0_connected}. Instead of attempting to estimate the cartesian-ness of the 2-cube $(*)$ directly, which seems difficult, we will take an indirect attack and first estimate the cartesian-ness of the 2-cubes $(a),(b),(c)$; then we will use \cite[3.9]{Ching_Harper} (e.g., \cite[1.8]{Goodwillie_calculus_2}) to deduce an estimate for $(*)$. Consider the 2-cube $(a)$. By higher Blakers-Massey \cite[1.7]{Ching_Harper} for $\AlgO$, we know that $(a)$ is $l$-cartesian where $l$ is the minimum of
\begin{align*}
  -2+l_{\{1,2\}}+1 &= -1+\infty\\
  -2+l_{\{1\}}+1+l_{\{2\}}+1 &= (2k+2)+(k+1)
\end{align*}
Hence $l=3k+3$ and we have calculated that the 2-cube $(a)$ is $(3k+3)$-cartesian. Consider the 2-cube $(b)$. The 2-cube $\Suspt_Y\capX\rightarrow\Suspt_Y\capC$ is $\infty$-cocartesian and has the form \eqref{eq:suspension_of_the_2_cube_X_to_C_special_case_0_connected} in $\AlgOY$, where $\Suspt_Y{*_Y}\wequiv{*_Y}$. It follows that the vertical maps in diagram \eqref{eq:suspension_of_the_2_cube_X_to_C_special_case_0_connected} are $(k+2)$-connected and the horizontal maps are $(2k+3)$-connected. By higher Blakers-Massey \cite[1.7]{Ching_Harper} for $\AlgO$, we know that $\Suspt_Y\capX\rightarrow\Suspt_Y\capC$ is $l$-cartesian where $l$ is the minimum of
\begin{align*}
  -2+l_{\{1,2\}}+1 &= -1+\infty\\
  -2+l_{\{1\}}+1+l_{\{2\}}+1 &= (2k+3)+(k+2)
\end{align*}
Hence $l=(3k+5)$ and we have calculated that the 2-cube $\Suspt_Y\capX\rightarrow\Suspt_Y\capC$ is $(3k+5)$-cartesian; therefore the 2-cube $(b)$ is $(3k+4)$-cartesian. Consider the 2-cube (c). We know that $C$ is $(2k+2)$-connected (rel. $Y$) from above, hence by Proposition \ref{prop:special_case_of_zero_cube} the map $C\rightarrow\Loopt_Y\Suspt_Y C$ is $(4k+6)$-connected. This calculation will produce a cartesian-ness estimate for the 2-cube $(c)$. Here is why: the 2-cube (c) has the form \eqref{eq:the_2_cube_labelled_c} in $\AlgOY$, where $\Loopt_Y\Suspt_Y{*_Y}\wequiv {*_Y}$. Taking homotopy fibers horizontally produces the map $\Loopt_Y C\rightarrow\Loopt_Y\Loopt_Y\Suspt_Y C$; this map is $\Loopt_Y$ of the right-hand vertical map. We know $(\#)$ is $(4k+6)$-connected from above; hence the 2-cube $(c)$ is $(4k+5)$-cartesian. Putting it all together, it follows from diagram \eqref{eq:the_strategy_picture_special_case_0_connected} and \cite[3.9]{Ching_Harper} (e.g., \cite[1.8]{Goodwillie_calculus_2}), together with our cartesian-ness estimates for $(a),(b),(c)$,  that the 2-cube $(*)$ of the form \eqref{eq:the_2_cube_labelled_star} in $\AlgOY$, is $(3k+3)$-cartesian. Finally, note that since the top horizontal map in \eqref{eq:the_2_cube_labelled_star} is $(2k+2)$-connected, it follows that the bottom horizontal map is $(2k+2)$-connected; i.e., the bottom horizontal 1-face is $(2k+2)$-cartesian. Hence we have verified that the 2-cube $\capX\rightarrow\Loopt_Y\Suspt_Y\capX$ satisfies: the 0-subcubes are $(k+1)$-cartesian, the 1-subcubes are $(2k+2)$-cartesian, and the 2-subcubes are $(3k+3)$-cartesian. Therefore, the 2-cube of the form $\capX\rightarrow\Loopt_Y\Suspt_Y\capX$ is $(\id+1)(k+1)$-cartesian in $\AlgOY$.

Consider the case of 2-cubes. Suppose $\capX$ is a $\{1,2\}$-cube in $\AlgOY$ of the form \eqref{eq:2_cube_X_in_AlgOY}. Assume that $\capX$ is $(\id+1)(k+1)$-cartesian in $\AlgOY$; this means that: the 0-subcubes are $(k+1)$-cartesian, the 1-subcubes are $(2k+2)$-cartesian, and the 2-subcubes are $(3k+3)$-cartesian. Let's verify that the 3-cube of the form $\capX\rightarrow\Loopt_Y\Suspt_Y\capX$ is $(\id+1)(k+1)$-cartesian in $\AlgOY$. It suffices to assume that $\capX$ is a cofibration 2-cube. Let $C$ be the iterated homotopy cofiber of $\capX$ in $\AlgOY$ and consider the associated $\infty$-cocartesian 3-cube of the form \eqref{eq:the_3_cube_X_to_C_special_case_0_connected} in $\AlgOY$, where $\capC$ is the indicated 2-face on the bottom. By Proposition \ref{prop:duality_special_case_of_k_connected}, we know that $\capX$ is $(\id(k+2)+k)$-cocartesian; in particular, $\capX$ is $(3k+4)$-cocartesian and hence $C$ is $(3k+4)$-connected (rel. $Y$). Putting it all together, it follows that the vertical maps in diagram \eqref{eq:the_3_cube_X_to_C_special_case_0_connected} are $(k+1)$-connected, the horizontal maps in the top 2-face (i.e., the 1-faces of $\capX$) are $(2k+2)$-connected, and the top 2-face is $(3k+4)$-cocartesian. We would like to estimate the cocartesian-ness of the back 2-face of the form \eqref{eq:back_two_face_of_the_form}. The upper horizontal map is $(2k+2)$-connected and the lower horizontal map is $\infty$-connected (i.e., a weak equivalence), hence by \cite[3.8]{Ching_Harper} the back 2-face in \eqref{eq:the_3_cube_X_to_C_special_case_0_connected} is $(2k+3)$-cocartesian in $\AlgOY$. Similarly, the left-hand 2-face in \eqref{eq:the_3_cube_X_to_C_special_case_0_connected} is $(2k+3)$-cocartesian in $\AlgOY$. Here is our strategy: consider the commutative diagram of 3-cubes in $\AlgOY$ of the form \eqref{eq:the_strategy_picture_special_case_0_connected}. Instead of attempting to estimate the cartesian-ness of the 3-cube $(*)$ directly, which seems difficult, we will take an indirect attack and first estimate the cartesian-ness of the 3-cubes $(a),(b),(c)$; then we will use \cite[3.9]{Ching_Harper} (e.g., \cite[1.8]{Goodwillie_calculus_2}) to deduce an estimate for $(*)$. Consider the 3-cube $(a)$. By higher Blakers-Massey \cite[1.7]{Ching_Harper} for $\AlgO$, we know that $(a)$ is $l$-cartesian where $l$ is the minimum of
\begin{align*}
  -3+l_{\{1,2,3\}}+1 &= -2+\infty\\
  -3+l_{\{1,2\}}+1+l_{\{3\}}+1 &= -1+(3k+4)+(k+1)\\
  -3+l_{\{1,3\}}+1+l_{\{2\}}+1 &= -1+(2k+3)+(2k+2)\\
  -3+l_{\{2,3\}}+1+l_{\{1\}}+1 &= -1+(2k+3)+(2k+2)\\
  -3+l_{\{1\}}+1+l_{\{2\}}+1+l_{\{3\}}+1 &= (2k+2)+(2k+2)+(k+1)
\end{align*}
Hence $l=(4k+4)$ and we have calculated that the 3-cube $(a)$ is $(4k+4)$-cartesian. Consider the 3-cube $(b)$. The 3-cube $\Suspt_Y\capX\rightarrow\Suspt_Y\capC$ is $\infty$-cocartesian in $\AlgOY$, where $\Suspt_Y{*_Y}\wequiv{*_Y}$. By higher Blakers-Massey \cite[1.7]{Ching_Harper} for $\AlgO$, we know that $\Suspt_Y\capX\rightarrow\Suspt_Y\capC$ is $l$-cartesian where $l$ is the minimum of
\begin{align*}
  -3+l_{\{1,2,3\}}+1 &= -2+\infty\\
  -3+l_{\{1,2\}}+1+l_{\{3\}}+1 &= -1+(3k+5)+(k+2)\\
  -3+l_{\{1,3\}}+1+l_{\{2\}}+1 &= -1+(2k+4)+(2k+3)\\
  -3+l_{\{2,3\}}+1+l_{\{1\}}+1 &= -1+(2k+4)+(2k+3)\\
  -3+l_{\{1\}}+1+l_{\{2\}}+1+l_{\{3\}}+1 &= (2k+3)+(2k+3)+(k+2)
\end{align*}
Hence $l=(4k+6)$ and we have calculated that the 3-cube $\Suspt_Y\capX\rightarrow\Suspt_Y\capC$ is $(4k+6)$-cartesian; therefore the 3-cube $(b)$ is $(4k+5)$-cartesian. Consider the 3-cube (c). We know that $C$ is $(3k+4)$-connected (rel. $Y$) from above, hence by Proposition \ref{prop:special_case_of_zero_cube} the map $C\rightarrow\Loopt_Y\Suspt_Y C$ is $(6k+10)$-connected. This calculation will produce a cartesian-ness estimate for the 3-cube $(c)$. Here is why: the 3-cube (c) has the form \eqref{eq:3_cube_c_has_the_form} in $\AlgOY$, where $\Loopt_Y\Suspt_Y{*_Y}\wequiv {*_Y'}$. Taking homotopy fibers (twice) in $\AlgOY$ produces the map $\Loopt^2_Y C\rightarrow\Loopt^2_Y\Loopt_Y\Suspt_Y C$; this map is $\Loopt^2_Y$ of the right-hand vertical map $(\#)$. We know $(\#)$ is $(6k+10)$-connected from above; hence the 3-cube $(c)$ is $(6k+8)$-cartesian. Putting it all together, it follows from diagram \eqref{eq:the_strategy_picture_special_case_0_connected} and \cite[3.9]{Ching_Harper} (e.g., \cite[1.8]{Goodwillie_calculus_2}), together with our cartesian-ness estimates for $(a),(b),(c)$,  that the 3-cube $(*)$ of the form
\begin{align*}
\xymatrix{
  \capX:\ar[d]_-{(*)} & \capX_\emptyset\ar[r]\ar[d] & \capX_{\{1\}}\ar[d]\\
  \Loopt_Y\Suspt_Y\capX: & \Loopt_Y\Suspt_Y\capX_\emptyset\ar[r] & 
  \Loopt_Y\Suspt_Y\capX_{\{1\}}
}
\end{align*}
in $\AlgOY$, is $(4k+4)$-cartesian. Let's calculate a cartesian-ness estimate for the 2-subcube $\Loopt_Y\Suspt_Y\capX$ of $(*)$. We know that $\capX$ is $(\id(k+2)+k)$-cocartesian in $\AlgOY$ from above, hence $\Suspt_Y\capX$ is $(\id(k+2)+k+1)$-cocartesian in $\AlgOY$. By higher Blakers-Massey \cite[1.7]{Ching_Harper} for $\AlgO$, we know that $\Suspt_Y\capX$ is $l$-cartesian where $l$ is the minimum of
\begin{align*}
  -2+l_{\{1,2\}}+1 &= -1+(3k+5)\\
  -2+l_{\{1\}}+1+l_{\{2\}}+1 &= (2k+3)+(2k+3)
\end{align*}
 Hence $l=(3k+4)$ and we have calculated that the 2-cube $\Suspt_Y\capX$ is $(3k+4)$-cartesian; therefore the 2-cube $\Loopt_Y\Suspt_Y\capX$ is $(3k+3)$-cartesian. Hence we have verified that the 3-cube $\capX\rightarrow\Loopt_Y\Suspt_Y\capX$ satisfies: the 0-subcubes are $(k+1)$-cartesian, the 1-subcubes are $(2k+2)$-cartesian, the 2-subcubes are $(3k+3)$-cartesian, and the 3-subcubes are $(4k+4)$-cartesian. Therefore, the 3-cube of the form $\capX\rightarrow\Loopt_Y\Suspt_Y\capX$ is $(\id+1)(k+1)$-cartesian in $\AlgOY$. And so forth.
\end{proof}

\begin{thm}
\label{thm:main_theorem_homotopical_estimates}
Let $k\geq 0$ and $1\leq r\leq\infty$. Let $W$ be a finite set and $\capX$ a $W$-cube in $\AlgOY$. Let $n=|W|$. If the $n$-cube $\capX$ is $(\id+1)(k+1)$-cartesian in $\AlgOY$, then so is the $(n+1)$-cube of the form $\capX\rightarrow\Loopt_Y^r\Suspt_Y^r\capX$.
\end{thm}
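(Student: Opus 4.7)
The plan is to adapt the proof strategy of Theorem \ref{thm:main_theorem_for_special_case_of_k_connected} to the $r$-fold setting by induction on the dimension $n$ of $\capX$, with the base case $n=0$ requiring an $r$-fold analog of Proposition \ref{prop:special_case_of_zero_cube}: for $k$-connected (rel.\ $Y$) $\capX_\emptyset$, the unit map $\capX_\emptyset \to \Loopt_Y^r \Suspt_Y^r \capX_\emptyset$ is $(2k+2)$-connected, uniformly in $1 \leq r < \infty$. This Freudenthal-type stability can be proved by a short induction on $r$: the step from $r$ to $r+1$ factors the unit as $\capX_\emptyset \to \Loopt_Y^r \Suspt_Y^r \capX_\emptyset \to \Loopt_Y^{r+1} \Suspt_Y^{r+1} \capX_\emptyset$, where the second map is $\Loopt_Y^r$ applied to the $1$-fold unit on $\Suspt_Y^r \capX_\emptyset$; since $\Suspt_Y^r \capX_\emptyset$ is $(k+r)$-connected (rel.\ $Y$), Proposition \ref{prop:special_case_of_zero_cube} gives a $(2k+2r+2)$-connected map, and applying $\Loopt_Y^r$ yields a $(2k+r+2)$-connected second step, so the composite remains $(2k+2)$-connected.

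For the inductive step on $n$, assume the result for cubes of dimension less than $n$, and let $\capX$ be an $(\id+1)(k+1)$-cartesian $n$-cube. By Proposition \ref{prop:duality_special_case_of_k_connected}, $\capX$ is also $(\id(k+2)+k)$-cocartesian. Assume $\capX$ is a cofibration $n$-cube; form the iterated homotopy cofiber $C$ of $\capX$ and the associated $\infty$-cocartesian $(n+1)$-cube $\capX\to\capC$. Consider the commutative diagram of $(n+1)$-cubes
\begin{align*}
\xymatrix{
  \capX \ar[r]^-{(a)} \ar[d]_-{(*)} & \capC \ar[d]^-{(c)}\\
  \Loopt_Y^r\Suspt_Y^r \capX \ar[r]^-{(b)} & \Loopt_Y^r\Suspt_Y^r \capC
}
\end{align*}
and estimate the cartesian-ness of $(a),(b),(c)$ separately via higher Blakers-Massey (\cite[1.7]{Ching_Harper}), then deduce the bound for $(*)$ from \cite[3.9]{Ching_Harper}. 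The estimate for $(a)$ is identical to that in the $r=1$ case (it does not involve $r$). For $(b)$, the connectivities get shifted up by $r$ via $\Suspt_Y^r$, and the subsequent $\Loopt_Y^r$ shifts cartesian-ness back down by $r$, reproducing the same numerical bound as in the $r=1$ argument. For $(c)$, the $r$-fold Freudenthal estimate from the base case plays the role that Proposition \ref{prop:special_case_of_zero_cube} played in the $r=1$ argument.

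The main obstacle is the careful numerical bookkeeping required to verify that the $r$-fold analogs of the cubes $(b)$ and $(c)$ admit cartesian-ness bounds at least as strong as in the $r=1$ case; specifically, for cube $(c)$, one must confirm that iterating $r$-fold homotopy fibers of the map $C\to\Loopt_Y^r\Suspt_Y^r C$ still recovers the requisite $(\id+1)(k+1)$-cartesian bound, using the base-case estimate. Once these estimates are verified, the pieces combine exactly as in the proof of Theorem \ref{thm:main_theorem_for_special_case_of_k_connected} to yield $(\id+1)(k+1)$-cartesian-ness of the $(n+1)$-cube $\capX \to \Loopt_Y^r \Suspt_Y^r \capX$ uniformly for $1\leq r<\infty$. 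The case $r=\infty$ then follows by passage to the appropriate sequential homotopy limit over $r$ arising from Hovey spectra on $\AlgOY$; uniformity of the estimates in $r$ ensures that the $(\id+1)(k+1)$-cartesian-ness is preserved in the limit.
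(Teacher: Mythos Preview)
Your argument for finite $r$ is correct and is essentially a fleshed-out version of what the paper sketches: the paper simply asserts that the $r=1$ proof of Theorem \ref{thm:main_theorem_for_special_case_of_k_connected} already exhibits the technique, and that $r=1$ is the most restrictive case because the unit $\capX\to\Loopt_Y^r\Suspt_Y^r\capX$ factors through the $r=1$ unit (exactly your base-case Freudenthal induction). Your observation that the $(a),(b),(c)$ estimates for general $r$ are at least as strong as for $r=1$ is precisely the content of the paper's remark; in fact the $(b)$ estimate improves with $r$ (it comes out $(n+2)(k+1)+(r-1)$-cartesian rather than merely ``the same''), while $(a)$ and $(c)$ are independent of $r$.

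Your treatment of $r=\infty$ has a slip and differs from the paper. The functor $\Loopt_Y^\infty\Suspt_Y^\infty$ is not a sequential homotopy \emph{limit} of the $\Loopt_Y^r\Suspt_Y^r$; the derived $0$-th level of the fibrant replacement of $\Susp_Y^\infty X$ in Hovey spectra is the sequential homotopy \emph{colimit} $\hocolim_r\Loopt_Y^r\Suspt_Y^r X$. With ``colimit'' in place of ``limit'', your route still works, but one must argue that the uniform $(\id+1)(k+1)$-cartesian bounds survive the filtered hocolim; this amounts to commuting a finite $\holim$ (the total homotopy fiber of the $(n+1)$-cube) past a sequential $\hocolim$, which is available in $\AlgOY$ since it holds in the underlying $\capR$-modules and the forgetful functor preserves both. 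The paper avoids this step entirely: for $r=\infty$ it reruns the $(a),(b),(c)$ strategy directly in $\SpectraN(\AlgOY)$, using that $\Suspt_Y^\infty$ preserves cocartesian-ness, $\Loopt_Y^\infty$ preserves cartesian-ness, and the stable estimates of \cite[3.10]{Ching_Harper} make each step at least as easy as in the $r=1$ case.
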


\begin{proof}
These estimates were worked out in \cite{Blomquist} for the special case of $Y=*$. The detailed proof above of Theorem \ref{thm:main_theorem_for_special_case_of_k_connected} in the case of $r=1$ shows why these estimates remain true in the context of $\capO$-algebras centered at $Y$; intuitively, $r=1$ is the most restrictive situation in terms of estimates (e.g,. the comparison map in all other cases factors through the comparison map for $r=1$). In the case of $r=\infty$, several of the estimate steps are easier since $\Suspt^\infty_Y$ preserves cocartesian-ness, $\Loopt^\infty_Y$ preserves cartesian-ness, and the stable estimates in \cite[3.10]{Ching_Harper} become available for each estimate step following the application of $\Suspt^\infty_Y$.
\end{proof}

\section{Homotopical resolutions}
\label{sec:proofs_of_main_results}

The purpose of this section is to introduce the homotopical resolutions studied in \cite{Blomquist} and to prove Theorems \ref{main_theorem_A}, \ref{main_theorem_B}, and \ref{main_theorem_C}. These kinds of homotopical resolutions are studied in \cite{Blumberg_Riehl}; see also \cite{Blomquist_Harper, Ching_Harper_derived_Koszul_duality}. There are adjunctions of the form

\begin{align}
\xymatrix{
  \AlgOY\ar@<0.5ex>[r]^-{\Susp^r_Y} &
  \AlgOY\ar@<0.5ex>[l]^-{\Loop^r_Y}
}
\quad\quad
\xymatrix{
  \AlgOY\ar@<0.5ex>[r]^-{\Susp^\infty_Y} &
  \SpectraN(\AlgOY)\ar@<0.5ex>[l]^-{\Loop^\infty_Y}
}
\quad\quad
(r\geq 1)
\end{align}
with left adjoint on top. Here, $\SpectraN(\AlgOY)$ denotes Hovey spectra \cite{Hovey_spectra} on $\AlgOY$; we are using that left Bousfield localization constructions produce semi-model categories in situations where left properness is not available (together with the fact that in $\AlgO$, and hence in $\AlgOY$, the generating cofibrations and acyclic cofibrations have cofibrant domains); see \cite{Batanin_White, Carmona, Goerss_Hopkins_moduli_problems, Harper_Zhang}. If we iterate the comparison map $\id\rightarrow \Loop^r_Y\Susp^r_Y$ and evaluate on $X\in\AlgOY$, this builds a cosimplicial resolution of $X$ with respect to $\Loop^r_Y\Susp^r_Y$ of the form $(1\leq r\leq \infty)$
\begin{align*}
\xymatrix{
  \id(X)\ar[r] &
  \Loop^r_Y\Susp^r_Y(X)\ar@<-0.5ex>[r]\ar@<0.5ex>[r] &
  (\Loop^r_Y\Susp^r_Y)^2(X)
  \ar@<-1.0ex>[r]\ar[r]\ar@<1.0ex>[r] &
  (\Loop^r_Y\Susp^r_Y)^3(X)\cdots
  }
\end{align*}
showing only the coface maps. This resolution is not what we want---it is not homotopy meaningful.

The basic idea is to turn this into a derived (homotopy meaningful) resolution by appropriately inserting fibrant replacements and cofibrant replacements. In more detail, let $r\geq 1$. Denote by $\function{\varepsilon}{Q}{\id}$ and $\function{m}{Q}{QQ}$ the counit and comultiplication maps of the cofibrant replacement comonad $Q$ on $\AlgOY$ (see \cite[6.1]{Blumberg_Riehl}) and define $\Suspt^r_Y:=\Susp^r_Y Q$ (resp. $\Suspt^\infty_Y:=\Susp^\infty_Y Q$). Denote by $\function{\eta}{\id}{\Phi}$ and $\function{m}{\Phi\Phi}{\Phi}$ the unit and multiplication maps of the fibrant replacement monad $\Phi$ on $\AlgOY$ (see \cite[6.1]{Blumberg_Riehl}) and define 
$\Loopt^r_Y:=\Loop^r_Y \Phi$. Similarly, denote by $\function{\eta}{\id}{F}$ and $\function{m}{FF}{F}$ the unit and multiplication maps of the fibrant replacement monad $F$ on $\SpectraN(\AlgOY)$ (see \cite[6.1]{Blumberg_Riehl}) and define $\Loopt_Y^\infty:=\Loop^\infty_Y F$.

\begin{rem} Since $\SpectraN(\AlgOY)$ is only equipped with a cofibrantly generated simplicial semi-model structure ($\AlgOY$ is not left proper, in general, so left Bousfield localization only produces a semi-model structure, without further work)---see, for instance, \cite{Batanin_White, Carmona, Goerss_Hopkins_moduli_problems, Harper_Zhang}---this simply means that $F,\Loopt^\infty_Y$ are only homotopy meaningful when evaluated on cofibrant objects. In other words, small object arguments (and their generalizations that produce monads) in $\SpectraN(\AlgOY)$ only behave as fibrant replacements when evaluated on cofibrant objects; we note that this is always satisfied in the homotopical resolutions below.
\end{rem}

It follows easily that we can build a cosimplicial resolution of $X$ with respect to $\Loopt^r_Y\Suspt^r_Y$ of the form $(1\leq r\leq \infty)$
\begin{align*}
\xymatrix{
  \id(X)\ar[r] &
  \Loopt^r_Y\Suspt^r_Y(X)\ar@<-0.5ex>[r]\ar@<0.5ex>[r] &
  (\Loopt^r_Y\Suspt^r_Y)^2(X)
  \ar@<-1.0ex>[r]\ar[r]\ar@<1.0ex>[r] &
  (\Loopt^r_Y\Suspt^r_Y)^3(X)\cdots
  }
\end{align*}
in $\AlgOY$, which is given precisely by the homotopical resolution
\begin{align}
\label{eq:cosimplicial_resolution_derived}
\xymatrix{
  Q(X)\ar[r] &
  Q\Loopt^r_Y\Suspt^r_Y(X)\ar@<-0.5ex>[r]\ar@<0.5ex>[r] &
  Q(\Loopt^r_Y\Suspt^r_Y)^2(X)
  \ar@<-1.0ex>[r]\ar[r]\ar@<1.0ex>[r] &
  Q(\Loopt^r_Y\Suspt^r_Y)^3(X)\cdots
  }
\end{align}
in $\AlgOY$ (showing only the coface maps); see, for instance, \cite{Blomquist, Blumberg_Riehl}. If $X\in\AlgOY$, the Bousfield-Kan completion $X^\wedge_{\Loopt^r_Y\Suspt^r_Y}$ of $X$ with respect to $\Loopt^r_Y\Suspt^r_Y$ is the homotopy limit
\begin{align*}
  X^\wedge_{\Loopt^r_Y\Suspt^r_Y}:=\holim\nolimits_\Delta Q(\Loopt^r_Y\Suspt^r_Y)^{\bullet+1}(X)
\end{align*}
of the cosimplicial resolution indicated on the right-hand side of \eqref{eq:cosimplicial_resolution_derived}. To get our hands on this construction, we filter $\Delta$ by its subcategories $\Delta^{\leq n}\subset\Delta$, $n\geq 0$, which leads to the following (e.g., \cite{Ching_Harper_derived_Koszul_duality}). Define 
\begin{align*}
  (\Loopt^r_Y\Suspt^r_Y)_n:=
  \holim\nolimits_{\Delta^{\leq n}}Q(\Loopt^r_Y\Suspt^r_Y)^{\bullet+1},
  \quad\quad
  n\geq 0
\end{align*}
It follows that the $\Loopt^r_Y\Suspt^r_Y$-completion of $X$ is weakly equivalent to
\begin{align}
\label{eq:the_indicated_tower}
  X^\wedge_{\Loopt^r_Y\Suspt^r_Y}\wequiv\holim
  \Bigl(
  (\Loopt^r_Y\Suspt^r_Y)_0(X)\leftarrow(\Loopt^r_Y\Suspt^r_Y)_1(X)
  \leftarrow(\Loopt^r_Y\Suspt^r_Y)_2(X)\leftarrow\cdots
  \Bigr)
\end{align}
the homotopy limit of the indicated tower; here, $(\Loopt^r_Y\Suspt^r_Y)_0(X)\wequiv \Loopt^r_Y\Suspt^r_Y(X)$.

\begin{proof}[Proof of Theorem \ref{main_theorem_A}]
Here is the basic idea. To verify that $X\wequiv X^\wedge_{\Loopt^r_Y\Suspt^r_Y}$, it suffices to verify that the map of the form
\begin{align*}
  X\wequiv QX\xrightarrow{(*)_n} (\Loopt^r_Y\Suspt^r_Y)_n(X)
\end{align*}
into the $n$-th stage of the completion tower in \eqref{eq:the_indicated_tower} has connectivity strictly increasing with $n$. The connectivity of the map $(*)_n$ is the same as the cartesian-ness of the coface $(n+1)$-cube (\cite[5.20]{Blomquist_Harper}) of the coaugmented cosimplicial resolution \eqref{eq:cosimplicial_resolution_derived} which we know (Theorem \ref{thm:main_theorem_homotopical_estimates}) is $((n+1)+1)(0+1)=n+2$; hence the map $(*)_n$ is $(n+2)$-connected which completes the proof.
\end{proof}

\begin{proof}[Proof of Theorem \ref{main_theorem_B}]
Here is the basic idea. We will follow the proof ideas in \cite{Schonsheck_fibration}  and  exploit the estimates in Theorem \ref{thm:main_theorem_homotopical_estimates}.  We start with the fibration sequence in $\AlgOY$ of the form $F\rightarrow E\rightarrow B$ and resolve the $E,B$ terms
\begin{align*}
\xymatrix{
  F\ar[r]\ar[d] &
  \tilde{F}^0\ar[d]\ar@<-0.5ex>[r]\ar@<0.5ex>[r] &
  \tilde{F}^1\ar[d]
  \ar@<-1.0ex>[r]\ar[r]\ar@<1.0ex>[r] &
  \tilde{F}^2\ar[d]\cdots\\
  E\ar[r]\ar[d] &
  \Loopt^r_Y\Suspt^r_Y(E)\ar[d]\ar@<-0.5ex>[r]\ar@<0.5ex>[r] &
  (\Loopt^r_Y\Suspt^r_Y)^2(E)
  \ar@<-1.0ex>[r]\ar[r]\ar@<1.0ex>[r]\ar[d] &
  (\Loopt^r_Y\Suspt^r_Y)^3(E)\cdots\ar[d]\\
  B\ar[r] &
  \Loopt^r_Y\Suspt^r_Y(B)\ar@<-0.5ex>[r]\ar@<0.5ex>[r] &
  (\Loopt^r_Y\Suspt^r_Y)^2(B)
  \ar@<-1.0ex>[r]\ar[r]\ar@<1.0ex>[r] &
  (\Loopt^r_Y\Suspt^r_Y)^3(B)\cdots
  }
\end{align*}
by their cosimplicial resolutions with respect to $\Loopt^r_Y\Suspt^r_Y$; this produces the bottom two rows of the indicated form (for notational convenience, we omit writing the $Q$). Taking homotopy fibers vertically in $\AlgOY$ produces the top horizontal row of the form $F\rightarrow\tilde{F}$; in particular, each of the rows is a coaugmented cosimplicial diagram in $\AlgOY$, and the columns are homotopy fiber sequences in $\AlgOY$. Since homotopy fibers commute with homotopy limits, together with the assumption that $E,B$ are 0-connected (rel. $Y$) and Theorem \ref{main_theorem_A}, we know that $F\wequiv\holim\nolimits_\Delta\tilde{F}$. 

The next step is to get $\Loopt^r_Y\Suspt^r_Y$-completion into the picture. Resolving each term in the upper row above by its cosimplicial resolution with respect to $\Loopt^r_Y\Suspt^r_Y$ produces a diagram of the form
\begin{align*}
\xymatrix{
  (\Loopt^r_Y\Suspt^r_Y)^3F\ar[r]^-{(\#)} &
  (\Loopt^r_Y\Suspt^r_Y)^3\tilde{F}^0\ar@<-0.5ex>[r]\ar@<0.5ex>[r] &
  (\Loopt^r_Y\Suspt^r_Y)^3\tilde{F}^1
  \ar@<-1.0ex>[r]\ar[r]\ar@<1.0ex>[r] &
  (\Loopt^r_Y\Suspt^r_Y)^3\tilde{F}^2\cdots\\
  (\Loopt^r_Y\Suspt^r_Y)^2F\ar[r]^-{(\#)}\ar@<-1.0ex>[u]\ar[u]\ar@<1.0ex>[u] &
  (\Loopt^r_Y\Suspt^r_Y)^2\tilde{F}^0\ar@<-0.5ex>[r]\ar@<0.5ex>[r]\ar@<-1.0ex>[u]\ar[u]\ar@<1.0ex>[u] &
  (\Loopt^r_Y\Suspt^r_Y)^2\tilde{F}^1\ar@<-1.0ex>[u]\ar[u]\ar@<1.0ex>[u]
  \ar@<-1.0ex>[r]\ar[r]\ar@<1.0ex>[r] &
  (\Loopt^r_Y\Suspt^r_Y)^2\tilde{F}^2\cdots\ar@<-1.0ex>[u]\ar[u]\ar@<1.0ex>[u]\\
  (\Loopt^r_Y\Suspt^r_Y)F\ar[r]^-{(\#)}\ar@<-0.5ex>[u]\ar@<0.5ex>[u] &
  (\Loopt^r_Y\Suspt^r_Y)\tilde{F}^0\ar@<-0.5ex>[r]\ar@<0.5ex>[r]\ar@<-0.5ex>[u]\ar@<0.5ex>[u] &
  (\Loopt^r_Y\Suspt^r_Y)\tilde{F}^1
  \ar@<-1.0ex>[r]\ar[r]\ar@<1.0ex>[r]\ar@<-0.5ex>[u]\ar@<0.5ex>[u] &
  (\Loopt^r_Y\Suspt^r_Y)\tilde{F}^2\cdots\ar@<-0.5ex>[u]\ar@<0.5ex>[u]\\
  F\ar[r]^-{(\#)}\ar[u] &
  \tilde{F}^0\ar@<-0.5ex>[r]\ar@<0.5ex>[r]\ar[u]_-{(**)} &
  \tilde{F}^1\ar[u]_-{(**)}
  \ar@<-1.0ex>[r]\ar[r]\ar@<1.0ex>[r] &
  \tilde{F}^2\cdots\ar[u]_-{(**)}
  }
\end{align*}
where each column (resp. row) is a coaugmented cosimplicial diagram in $\AlgOY$. Since $E,B$ are 0-connected (rel. $Y$) by assumption, we know from Theorem \ref{thm:main_theorem_homotopical_estimates} that the coface $(n+1)$-cubes (\cite[5.20]{Blomquist_Harper}) associated to the coaugmented cosimplical resolutions of the form
\begin{align*}
  E&\rightarrow(\Loopt^r_Y\Suspt^r_Y)^{\bullet+1}E\\
  B&\rightarrow(\Loopt^r_Y\Suspt^r_Y)^{\bullet+1}B
\end{align*}
are $(\id+1)$-cartesian for each $n\geq -1$. Hence it follows, by several applications of \cite[3.8]{Ching_Harper} and \cite[1.18]{Goodwillie_calculus_2} that the coface $(n+1)$-cube associated to $F\rightarrow\tilde{F}$ is $\id$-cartesian for each $n\geq -1$. An easy consequence of higher Blakers-Massey (and its dual) \cite[1.7, 1.11]{Ching_Harper} for $\AlgO$ is that $\Loopt^r_Y\Suspt^r_Y$ preserves $\id$-cartesian $(n+1)$-cubes for each $n\geq -1$. Hence it follows that the coface $(n+1)$-cubes (\cite[5.20]{Blomquist_Harper}) associated to the coaugmented cosimplicial diagrams
\begin{align*}
  (\Loopt^r_Y\Suspt^r_Y)^k F\rightarrow 
  (\Loopt^r_Y\Suspt^r_Y)^k\tilde{F},\quad\quad
  k\geq 0
\end{align*}
are $\id$-cartesian for each $n\geq -1$. Therefore, each of the maps 
\begin{align}
\label{eq:the_estimates_we_need_for_sharp_stuff}
  (\Loopt^r_Y\Suspt^r_Y)^k F\xrightarrow{(\#)_n}
  \holim\nolimits_{\Delta^{\leq n}}
  (\Loopt^r_Y\Suspt^r_Y)^k\tilde{F},\quad\quad
  k\geq 0
\end{align}
is $(n+1)$-connected; hence each of the maps $(\#)$ induces a weak equivalence on $\holim_\Delta$. In other words, applying $\holim_\Delta$ horizontally produces the left-hand column, and therefore, subsequently applying $\holim_\Delta$ vertically produces $F^\wedge_{\Loopt_Y^r\Suspt_Y^r}$. What about the other way? By formal arguments (i.e., $\Loopt^r_Y$ commutes with homotopy fibers), the $(**)$ columns have extra codegeneracy maps $s^{-1}$ \cite[6.2]{Dwyer_Miller_Neisendorfer}; hence applying $\holim_\Delta$ vertically induces a weak equivalence \cite[3.16]{Dror_Dwyer_long_homology} with the $\tilde{F}$ cosimplicial diagram, and therefore, subsequently applying $\holim_\Delta$ horizontally produces $F$ (i.e., we know that $F\wequiv\holim\nolimits_\Delta\tilde{F}$ as noted above, or by the connectivities in \eqref{eq:the_estimates_we_need_for_sharp_stuff} with $k=0$). Hence we have verified that $F\wequiv F^\wedge_{\Loopt_Y^r\Suspt_Y^r}$. 

The more general case of an $\infty$-cartesian 2-cube is nearly identical; constructing $F\rightarrow\tilde{F}$ by taking homotopy pullbacks instead of homotopy fibers, it follows, by several applications of \cite[3.8]{Ching_Harper} and \cite[1.18]{Goodwillie_calculus_2}, that the coface $(n+1)$-cube associated to $F\rightarrow\tilde{F}$ is $\id$-cartesian for each $n\geq -1$, and the above arguments complete the proof.
\end{proof}

\begin{proof}[Proof of Theorem \ref{main_theorem_C}]
Here is the basic idea. We will follow the proof ideas in \cite{Schonsheck_TQ} and  exploit the estimates in Theorem \ref{thm:main_theorem_homotopical_estimates}. Start with the tower \eqref{eq:the_indicated_tower} associated with the cosimplicial resolution of the identity functor on $\AlgOY$ with respect to $\Loopt^\infty_Y\Suspt^\infty_Y$, resolve each functor in the tower
\begin{align}
\label{eq:tower_of_towers_diagram}
\xymatrix{
  P_3^Y(\Loopt^\infty_Y\Suspt^\infty_Y)_0(X)\ar[d] &
  P_3^Y(\Loopt^\infty_Y\Suspt^\infty_Y)_1(X)\ar[l]\ar[d] &
  P_3^Y(\Loopt^\infty_Y\Suspt^\infty_Y)_2(X)\ar[l]\ar[d]\cdots\\
  P_2^Y(\Loopt^\infty_Y\Suspt^\infty_Y)_0(X)\ar[d] &
  P_2^Y(\Loopt^\infty_Y\Suspt^\infty_Y)_1(X)\ar[l]\ar[d] &
  P_2^Y(\Loopt^\infty_Y\Suspt^\infty_Y)_2(X)\ar[l]\ar[d]\cdots\\
  P_1^Y(\Loopt^\infty_Y\Suspt^\infty_Y)_0(X) &
  P_1^Y(\Loopt^\infty_Y\Suspt^\infty_Y)_1(X)\ar[l] &
  P_1^Y(\Loopt^\infty_Y\Suspt^\infty_Y)_2(X)\ar[l]\cdots\\
  (\Loopt^\infty_Y\Suspt^\infty_Y)_0(X)\ar@{.>}[u] &
  (\Loopt^\infty_Y\Suspt^\infty_Y)_1(X)\ar[l]\ar@{.>}[u] &
  (\Loopt^\infty_Y\Suspt^\infty_Y)_2(X)\ar[l]\ar@{.>}[u]\cdots
}
\end{align}
by its Taylor tower \cite{Goodwillie_calculus_3} centered at $Y$, and evaluate the result on $X$. We know, by Theorem \ref{thm:main_theorem_homotopical_estimates}, that the map $\id\rightarrow(\Loopt^\infty_Y\Suspt^\infty_Y)_n$ satisfies $O_{n+1}(0,1)$ (\cite[1.2]{Goodwillie_calculus_3}) for every $n\geq 0$ (for notational convenience, we omit writing the $Q$); in particular, the identity functor and $(\Loopt^\infty_Y\Suspt^\infty_Y)_n$ agree to order $(n+1)$ via this map; this implies (\cite[1.6]{Goodwillie_calculus_3}) that the maps
\begin{align*}
  P_{n+1}^Y(\id)(X)&\xrightarrow{\wequiv} P_{n+1}^Y(\Loopt^\infty_Y\Suspt^\infty_Y)_n(X)\\
  P_{n}^Y(\id)(X)&\xrightarrow{\wequiv} P_{n}^Y(\Loopt^\infty_Y\Suspt^\infty_Y)_n(X)\\
  \cdots\\
  P_1^Y(\id)(X)&\xrightarrow{\wequiv} P_1^Y(\Loopt^\infty_Y\Suspt^\infty_Y)_n(X)
\end{align*}
are weak equivalences for every $n\geq 0$. This means that the rows above the dotted arrows in \eqref{eq:tower_of_towers_diagram} are eventually constant and hence applying $\holim$ horizontally produces the tower $\{P_n^Y(\id)(X)\}$, and therefore, subsequently applying $\holim$ vertically produces $P_\infty^Y(\id)(X)$. What about the other way? By assumption, $*_Y\rightarrow \Suspt_Y^\infty X$ in $\SpectraN(\AlgOY)$ is 0-connected (Remark \ref{rem:about_stuff_useful}). It follows, by iteratively applying higher Blakers-Massey (and its dual) \cite[1.7, 1.11]{Ching_Harper} for $\AlgO$,
\begin{align*}
  (\Loopt^\infty_Y\Suspt^\infty_Y)^k(X)\xrightarrow{(\#)_1}
  T_1^Y(\Loopt^\infty_Y\Suspt^\infty_Y)^k(X)\rightarrow
  T_1^Y(T_1^Y(\Loopt^\infty_Y\Suspt^\infty_Y)^k)(X)\rightarrow\cdots
\end{align*}
that the maps $(\#)_1$ are $3$-connected (and the indicated subsequent maps are even higher connected) for each $k\geq 2$. Hence the maps
\begin{align*}
  (\Loopt^\infty_Y\Suspt^\infty_Y)^k(X)\xrightarrow{(*)_1}
  P_1^Y(\Loopt^\infty_Y\Suspt^\infty_Y)^k(X)
\end{align*}
are $3$-connected for each $k\geq 2$; if $k=1$, no estimates are required: the map $(*)_1$ is $\infty$-connected since $\Loopt^\infty_Y\Suspt^\infty_Y\wequiv P_1^Y(\id)$ and hence $P_1^Y(\id)\rightarrow P_1^Y(P_1^Y(\id))$ is a weak equivalence. That was a useful warmup. More generally, by iteratively applying higher Blakers-Massey (and its dual) \cite[1.7, 1.11]{Ching_Harper} for $\AlgO$, it follows that $(n\geq 1)$
\begin{align*}
  (\Loopt^\infty_Y\Suspt^\infty_Y)^k(X)\xrightarrow{(\#)_n}
  T_n^Y(\Loopt^\infty_Y\Suspt^\infty_Y)^k(X)\rightarrow
  T_n^Y(T_n^Y(\Loopt^\infty_Y\Suspt^\infty_Y)^k)(X)\rightarrow\cdots
\end{align*}
the maps $(\#)_n$ are $(n+2)$-connected (and the indicated subsequent maps are even higher connected) for each $k\geq 2$. Hence the maps
\begin{align*}
  (\Loopt^\infty_Y\Suspt^\infty_Y)^k(X)\xrightarrow{(*)_n}
  P_n^Y(\Loopt^\infty_Y\Suspt^\infty_Y)^k(X),\quad\quad
  n\geq 1
\end{align*}
are $(n+2)$-connected for each $k\geq 2$; if $k=1$, no estimates are required: the map $(*)_n$ is $\infty$-connected since $\Loopt^\infty_Y\Suspt^\infty_Y\wequiv P_1^Y(\id)$ and hence $P_1^Y(\id)\rightarrow P_n^Y(P_1^Y(\id))$ is a weak equivalence. It now follows by iteratively applying \cite[3.8]{Ching_Harper} (e.g., \cite[1.6]{Goodwillie_calculus_2}) that the maps
\begin{align*}
  (\Loopt^\infty_Y\Suspt^\infty_Y)_k(X)\xrightarrow{(**)_n}
  P_{n+k}^Y(\Loopt^\infty_Y\Suspt^\infty_Y)_k(X),\quad\quad
  n\geq 1
\end{align*}
are $(n+2)$-connected for each $k\geq 1$; if $k=0$, no estimates are required: the map $(**)_n$ is $\infty$-connected since $\Loopt^\infty_Y\Suspt^\infty_Y\wequiv P_1^Y(\id)$ and hence $P_1^Y(\id)\rightarrow P_n^Y(P_1^Y(\id))$ is a weak equivalence. Let's illustrate the case of $k=1$: the map $(**)_n$ fits into a 3-cube of the form

\begin{align}
\label{eq:3_cube_c_has_the_form_for_final_proof}
\xymatrix{
(\Loopt^\infty_Y\Suspt^\infty_Y)_1 X\ar[dd]\ar[rr]\ar[dr]^-{(**)_n} &&
\Loopt^\infty_Y\Suspt^\infty_Y X\ar'[d][dd]\ar[dr]^{\wequiv}\\
&P_{n+1}^Y(\Loopt^\infty_Y\Suspt^\infty_Y)_1 X\ar[dd]\ar[rr] &&
P_{n+1}^Y(\Loopt^\infty_Y\Suspt^\infty_Y) X\ar[dd]\\
\Loopt^\infty_Y\Suspt^\infty_Y X\ar'[r][rr]\ar[dr]^{\wequiv} &&
(\Loopt^\infty_Y\Suspt^\infty_Y)^2 X\ar[dr]^-{(*)_{n+1}}\\
&P_{n+1}^Y(\Loopt^\infty_Y\Suspt^\infty_Y) X\ar[rr] &&
P_{n+1}^Y(\Loopt^\infty_Y\Suspt^\infty_Y)^2 X
}
\end{align}
The back 2-face is $\infty$-cartesian by definition, hence the front 2-face is $\infty$-cartesian ($P_{n+1}^Y$ commutes (\cite[1.7]{Goodwillie_calculus_3}) with such $\holim$'s). The 3-cube is therefore $\infty$-cartesian by \cite[3.8]{Ching_Harper} (e.g., \cite[1.6]{Goodwillie_calculus_2}). We know from above that the map $(*)_{n+1}$ is $(n+3)$-connected, hence by \cite[3.8]{Ching_Harper} the right-hand 2-face is $(n+2)$-cartesian. Since the 3-cube is $\infty$-cartesian, it follows that the left-hand 2-face is $(n+2)$-cartesian. Therefore, it follows, by another application of \cite[3.8]{Ching_Harper} (e.g., \cite[1.6]{Goodwillie_calculus_2}) that the map $(**)_n$ is $(n+2)$-connected. The other cases are similar. The upshot is: it follows that applying $\holim$ vertically (above the dotted arrows) in \eqref{eq:tower_of_towers_diagram} produces the bottom horizontal tower, and therefore, subsequently applying $\holim$ horizontally produces $X^\wedge_{\Loopt_Y^\infty\Suspt_Y^\infty}$. Hence we have verified that $P^Y_\infty(\id)(X)\wequiv X^\wedge_{\Loopt_Y^\infty\Suspt_Y^\infty}$.
\end{proof}

\bibliographystyle{plain}
\bibliography{FunctorCalcCompletion}

\end{document}